\theoremstyle{plain}
\newtheorem{theorem}{Theorem}[section]
\newtheorem{cor}[theorem]{Corollary}
\newtheorem{def-thm}[theorem]{Definition-Theorem}
\newtheorem{lemma}[theorem]{Lemma}
\newtheorem{defi}[theorem]{Definition}
\theoremstyle{definition}
\def\min{\mathop{\mathrm{min}}}
\def\CC{\mathbb C}
\def\PP{\mathbb P}
\begin{document}
\title[Holomorphic curves ]{Holomorphic curves whose domains are Riemann surfaces}
\author[X.J. Dong]
{Xianjing Dong}

\address{Academy of Mathematics and Systems Sciences \\ Chinese Academy of Sciences \\ Beijing, 100190, P.R. China}
\email{xjdong@amss.ac.cn}


\subjclass[2010]{32H30.} \keywords{Holomorphic curve;  Algebraic variety; Riemann surface; Defect relation; Brownian motion.}
\date{}
\maketitle \thispagestyle{empty} \setcounter{page}{1}

\begin{abstract}
\noindent We establish a defect relation of  holomorphic curves from  a general open Riemann surface into a normal complex projective
variety, with Zariski-dense image intersecting effective
Cartier divisors. 
\end{abstract}

\vskip\baselineskip

\setlength\arraycolsep{2pt}
\medskip

\section{Introduction}

Value distribution  of holomorphic curves has grown into a very rich branch in Nevanlinna theory \cite{Noguchi, ru} since H. Cartan \cite{Cart} established his Second Main Theorem of holomorphic curve from $\mathbb C$ into $\mathbb P^n(\CC)$ intersecting hyperplanes in general position. 
Many well-known results were obtained, referred to Ahlfors \cite{ahlfors}, Nochka \cite{noc, nochka}, 
Noguchi-Winkelmann \cite{Noguchi-W, Noguchi}, Ru \cite{ru,ru00,ru0,ru1,Ru-Sibony}, Shabat \cite{Shabat}, Tiba \cite{tiba} and Yamanoi \cite{yam},  etc. In the   paper, we  would further  develop the 
well-known Ru's result of holomorphic curves by generalizing  the source space $\mathbb C$ to a general open Riemann surface through  Brownian motion  initiated by Carne \cite{carne} and   developed by Atsuji \cite{at,atsuji}.

Let $S$ be an open  Riemann surface. By uniformization theorem,
one could equip $S$ with a complete Hermitian metric $ds^2=2gdzd\overline{z}$ such that
the
   Gauss curvature
$K_S\leq0$  associated to $g,$  here $K_S$ is defined by
$$K_S=-\frac{1}{4}\Delta_S\log g=-\frac{1}{g}\frac{\partial^2\log g}{\partial z\partial\overline z}.$$
Obviously, $(S,g)$ is a complete K\"ahler manifold with  associated K\"ahler form
$\alpha=g\frac{\sqrt{-1}}{2\pi}dz\wedge d\overline{z}.$
Set
\begin{equation}\label{kappa}
  \kappa(t)=\min\big\{K_S(x): x\in \overline{D(t)}\big\}
\end{equation}
which is a non-positive and decreasing continuous function defined on $[0,\infty).$

Fix $o\in S$ as a reference point. Denote by $D(r)$ the geodesic disc centered at $o$ with radius $r,$ and by $\partial D(r)$ the boundary of $D(r).$
By Sard's theorem, $\partial D(r)$ is a submanifold of $S$ for almost all $r>0.$
Also, we denote by $g_r(o,x)$ the Green function of $\Delta_S/2$ with Dirichlet boundary condition and a pole at $o,$ and by $d\pi^r_o(x)$ the harmonic
measure on $\partial D(r)$ with respect to $o.$

Let $$f:S\rightarrow X$$  be a holomorphic curve, where   $X$ is a complex projective variety.  Let us   
first  introduce   Nevanlinna's functions on
  Riemann surfaces which are extensions of  the classical ones on $\mathbb C.$
Let $L\rightarrow X$
be an ample holomorphic line bundle equipped with Hermitian metric $h.$
 We define the \emph{characteristic function} of $f$ with respect to $L$ by
 \begin{eqnarray*}\label{}
   T_{f,L}(r)
   &=& \pi\int_{D(r)}g_r(o,x)f^*c_1(L,h) \\
   &=&-\frac{1}{4}\int_{D(r)}g_r(o,x)\Delta_S\log h\circ f(x)dV(x),
 \end{eqnarray*}
 where $dV(x)$ is the Riemannian volume measure of $S.$ It can be easily known that $T_{f,L}(r)$ is independent
 of the choices of  metrics on $L,$ up to a bounded term.  Since a holomorphic line bundle  on $X$
 can be written as the difference of two  ample holomorphic line bundles, the definition of $T_{f,L}(r)$ can extend to
 an arbitrary holomorphic line bundle.
 For a convenience, we use $T_{f,D}(r)$ to replace $T_{f,L_D}(r)$ for an effective Cartier divisor $D$ on $X.$
 Given an ample effective Cartier divisor $D$ on $X,$
  the Weil function of $D$ is well defined by 
$$\lambda_D(x)=-\log\|s_D(x)\|$$
up to a bounded term, and here  $s_D$ is the canonical section associated to $D.$ Note also that an effective Cartier divisor
 can be written as the difference of two ample effective Cartier divisors, and so the definition for  Weil functions can  extend to  an arbitrary 
 effective Cartier divisor.
 We define the \emph{proximity function} of $f$ with respect to $D$ by
 $$m_f(r,D)=\int_{\partial D(x)}\lambda_D\circ f(x)d\pi^r_o(x).$$
Now write $s_D=\tilde{s}_De$ locally, where $e$ is a local holomorphic frame of $(L_D,h).$
 The \emph{counting function}
 of $f$ with respect to $D$ is defined by
\begin{eqnarray*}
N_f(r,D)
&=& \pi \sum_{x\in f^*D\cap D(r)}g_r(o,x) \\
&=& \pi\int_{D(r)}g_r(o,x)dd^c\big{[}\log|\tilde{s}_D\circ f(x)|^2\big{]} \\
&=&\frac{1}{4}\int_{D(r)}g_r(o,x)\Delta_S\log|\tilde{s}_D\circ f(x)|^2dV(x).
\end{eqnarray*}
\noindent\textbf{Remark.} When $S=\mathbb C,$ the Green function is $(\log\frac{r}{|z|})/\pi$
 and the harmonic measure is $d\theta/2\pi.$ By integration by part,  we  observe that
it agrees with the classical ones.

 We introduce the concept of Nevanlinna constant proposed by Ru.
 \begin{defi}[\cite{ru0,ru1}]
 Let $L$
 be a holomorphic line bundle over  $X,$ and $D$ be an effective Cartier divisor  on $X.$
 If $X$ is  normal, then we define
 $${\rm{Nev}}(L,D)=\inf_{k,V,\mu}\frac{\dim_{\mathbb C}V}{\mu},$$
\noindent where $``\inf"$ is taken over all triples $(k,V,\mu)$ such that $V\subseteq H^0(X,kL)$ is a linear subspace
  with $\dim_{\mathbb C}V>1,$ and $\mu>0$ is a number with the property: for each $x\in {\rm{Supp}}D,$ there exists a basis $\mathscr B_x$ of $V$ such that
  $$\sum_{s\in\mathscr B_x}{\rm{ord}}_{E}(s)\geq\mu {\rm{ord}}_{E}(kD)$$
  for all irreducible components $E$ of $D$ passing through $x.$ If there exists  no such triples $(k,V,\mu),$ one defines ${\rm{Nev}}(L,D)=\infty.$
  If $X$ is not normal, then ${\rm{Nev}}(L,D)$ is defined by pulling back to the  normalization of  $X.$
\end{defi}

The main purpose of this paper is to explore the value distribution theory of holomorphic curves 
into complex projective varieties by extending 
 source space $\mathbb C$ to a general open Riemann surface. We 
prove  the following theorem
 \begin{theorem}\label{thm1} Let $L$ be a holomorphic line bundle
 over a normal complex projective variety $X$
with $\dim_{\mathbb C}H^0(X,kL)\geq1$ for some $k>0.$ Let $D$ be an effective Cartier divisor
on $X.$ Let $f:S\rightarrow X$ be a holomorphic curve with
Zariski-dense image. Then
$$m_f(r,D)
\leq {\rm{Nev}}(L,D)T_{f,L}(r)+o\big(T_{f,L}(r)\big)+O\Big(-\kappa(r)r^2+\log^+\log r\Big) \big\|,$$
where $\kappa$ is defined by $(\ref{kappa}),$ and  $``\|"$ means that the inequality holds for  $r>1$  outside a  subset of finite Lebesgue measure.
\end{theorem}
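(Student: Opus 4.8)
The plan is to carry over to the Riemann-surface setting the strategy by which Ru derives such inequalities from his Nevanlinna constant. Since $X$ is normal, no passage to a normalization is needed, and we may assume $\mathrm{Nev}(L,D)<\infty$, there being nothing to prove otherwise. Fix $\varepsilon>0$ and, by definition of $\mathrm{Nev}(L,D)$, choose a triple $(k,V,\mu)$ with $V\subseteq H^{0}(X,kL)$, $n+1:=\dim_{\mathbb C}V>1$, $\mu>0$, and
\[
\frac{\dim_{\mathbb C}V}{\mu}<\mathrm{Nev}(L,D)+\varepsilon,
\]
having for each $x\in\mathrm{Supp}\,D$ a basis $\mathscr B_{x}$ with $\sum_{s\in\mathscr B_{x}}\mathrm{ord}_{E}(s)\ge\mu\,\mathrm{ord}_{E}(kD)$ for every component $E$ of $D$ through $x$. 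Fix a basis $s_{0},\dots,s_{n}$ of $V$; it defines a rational map $\Phi=[s_{0}:\cdots:s_{n}]\colon X\dashrightarrow\mathbb P^{n}$ with base divisor $B\ge 0$, so $kL=\Phi^{*}\mathcal O_{\mathbb P^{n}}(1)+B$ as divisor classes. Because $f(S)$ is Zariski-dense, it lies in no divisor $\mathrm{div}(s)$ with $0\ne s\in V$; hence $F:=\Phi\circ f\colon S\to\mathbb P^{n}$ is a well-defined holomorphic curve, which is moreover linearly non-degenerate, and $T_{F}(r)\le k\,T_{f,L}(r)+O(1)$ straight from the definitions.

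Next comes the local-to-global comparison of the proximity function. Fix $x_{0}\in\mathrm{Supp}\,D$ and express $\mathscr B_{x_{0}}=\{b_{0},\dots,b_{n}\}$ in the coordinates $s_{0},\dots,s_{n}$; the $b_{j}$ are linear forms cutting out hyperplanes $H_{0}^{x_{0}},\dots,H_{n}^{x_{0}}$ of $\mathbb P^{n}$ in general position, with $(b_{j})=B+\Phi^{*}H_{j}^{x_{0}}$ as divisors on $X$. The order hypothesis says exactly that $A_{x_{0}}:=\sum_{j=0}^{n}(b_{j})$ has $\mathrm{ord}_{E}(A_{x_{0}})\ge\mathrm{ord}_{E}(\mu kD)$ for every component $E$ of $D$ through $x_{0}$; hence $A_{x_{0}}-\mu kD$ is effective on some neighbourhood of $x_{0}$, and therefore $\mu k\,\lambda_{D}(y)\le\lambda_{A_{x_{0}}}(y)+O(1)=\sum_{j=0}^{n}\lambda_{(b_{j})}(y)+O(1)$ there. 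Restricting to $x\in S$ with $f(x)$ near $x_{0}$ and using $(b_{j})=B+\Phi^{*}H_{j}^{x_{0}}$,
\[
\mu k\,\lambda_{D}\!\big(f(x)\big)\ \le\ \sum_{j=0}^{n}\lambda_{H_{j}^{x_{0}}}\!\big(F(x)\big)+(n+1)\,\lambda_{B}\!\big(f(x)\big)+O(1).
\]
Covering the compact set $\mathrm{Supp}\,D$ by finitely many such neighbourhoods, collecting the hyperplanes occurring into a finite family $H_{1},\dots,H_{M}$, and using that $\lambda_{D}$ is bounded outside a neighbourhood of $\mathrm{Supp}\,D$ while Weil functions are bounded below, we obtain the global inequality
\[
\mu k\,\lambda_{D}\!\big(f(x)\big)\ \le\ \max_{J}\ \sum_{j\in J}\lambda_{H_{j}}\!\big(F(x)\big)+(n+1)\,\lambda_{B}\!\big(f(x)\big)+O(1),\qquad x\in S,
\]
the maximum being over the $(n+1)$-element subsets $J\subseteq\{1,\dots,M\}$ for which $\{H_{j}\}_{j\in J}$ is in general position.

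Integrating against the harmonic measure $d\pi^{r}_{o}$, and bounding $m_{f}(r,B)\le T_{f,B}(r)+O(1)=k\,T_{f,L}(r)-T_{F}(r)+O(1)$ (First Main Theorem, together with $kL=\Phi^{*}\mathcal O_{\mathbb P^{n}}(1)+B$), we arrive at
\[
\mu k\,m_{f}(r,D)\ \le\ \int_{\partial D(r)}\max_{J}\sum_{j\in J}\lambda_{H_{j}}\!\circ F\ d\pi^{r}_{o}+(n+1)\big(k\,T_{f,L}(r)-T_{F}(r)\big)+O(1).
\]
We now invoke Cartan's Second Main Theorem for the linearly non-degenerate holomorphic curve $F\colon S\to\mathbb P^{n}$ — the version over a general open Riemann surface established by way of Atsuji's Brownian-motion and logarithmic-derivative estimates — in its max form
\[
\int_{\partial D(r)}\max_{J}\sum_{j\in J}\lambda_{H_{j}}\!\circ F\ d\pi^{r}_{o}\ \le\ (n+1)\,T_{F}(r)+O\!\big(-\kappa(r)r^{2}+\log^{+}\log r\big)\ \big\|.
\]
The two $(n+1)T_{F}(r)$ terms cancel and, dividing by $\mu k$,
\[
m_{f}(r,D)\ \le\ \frac{\dim_{\mathbb C}V}{\mu}\,T_{f,L}(r)+O\!\big(-\kappa(r)r^{2}+\log^{+}\log r\big)\ \big\|\ \le\ \big(\mathrm{Nev}(L,D)+\varepsilon\big)T_{f,L}(r)+O\!\big(-\kappa(r)r^{2}+\log^{+}\log r\big)\ \big\|.
\]
Finally, running this argument along a sequence $\varepsilon_{i}\downarrow 0$ and amalgamating the finitely many exceptional $r$-sets by the standard diagonal procedure absorbs the $\varepsilon\,T_{f,L}(r)$ into an $o\big(T_{f,L}(r)\big)$ term, which is the assertion of the theorem.

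I expect the genuine obstacle to be the Second Main Theorem over $S$ and, within it, the logarithmic derivative lemma in the Brownian-motion formalism: one must bound harmonic-measure averages over $\partial D(r)$ of $\log^{+}$ of ``derivative-to-function'' ratios — equivalently, control the characteristic of the derivative — in terms of $T_{F}(r)$ with the precise curvature-dependent error $O(-\kappa(r)r^{2}+\log^{+}\log r)$, which is where the lower curvature bound $\kappa$ and the geometry of the geodesic discs $D(r)$ enter through the Green function $g_{r}(o,\cdot)$. The calibration of the Wronskian/ramification term with the max-over-$J$ form of the proximity sum, and the verification that the local comparison above holds with uniform $O(1)$ constants over all of $\mathrm{Supp}\,D$ (in particular near the singular locus of $D$ and the intersections of its components), are the remaining points requiring care.
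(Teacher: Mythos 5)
Your proposal is correct and follows essentially the same approach as the paper: local comparison of $\lambda_D$ with max-sums of section Weil functions using the Nevanlinna-constant bases, passage to a linearly non-degenerate curve into $\mathbb P^{\dim_{\mathbb C}V-1}(\mathbb C)$, and the max-form Cartan-type Second Main Theorem over $S$ with the $O(-\kappa(r)r^2+\log^+\log r)$ error, with the $(\dim_{\mathbb C}V)\,T_F(r)$ terms cancelled through the base-divisor and First Main Theorem bookkeeping. The paper packages the middle steps as a standalone statement (Theorem \ref{xy12}, passing to the closure of the graph of the rational map rather than manipulating $\Phi$ directly) and leaves the $\varepsilon$-to-$o(T_{f,L}(r))$ diagonal argument implicit, whereas you inline and make both explicit, but the substance coincides.
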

The  term $\kappa(r)r^2$ in the above theorem appears from the bending of metric of $S.$ In particular, when $S=\mathbb C,$ it deduces $\kappa(r)\equiv0$ and  $T_{f,L}(r)\geq O(\log r)$ for a holomorphic curve 
$f$ with Zariski-dense image in $X.$  As a consequence, we recover a result of Ru:
 \begin{cor}[\cite{ru0}]\label{} The same conditions are assumed as in Theorem $\ref{thm1}$. Let $f:\mathbb C\rightarrow X$ be a holomorphic curve with
Zariski-dense image. Then
$$m_f(r,D)
\leq {\rm{Nev}}(L,D)T_{f,L}(r)+o\big(T_{f,L}(r)\big)  \big\|.$$
\end{cor}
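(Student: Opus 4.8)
The plan is to obtain the Corollary by specializing Theorem~\ref{thm1} to $S=\mathbb{C}$. First I would equip $\mathbb{C}$ with the standard flat metric $ds^2=dz\,d\overline{z}$, i.e. take $g\equiv\tfrac12$ in $ds^2=2g\,dz\,d\overline{z}$; this is a complete Hermitian metric whose Gauss curvature vanishes identically, so it meets all the hypotheses placed on $(S,g)$ in the body of the paper, and by $(\ref{kappa})$ the associated function satisfies $\kappa(t)\equiv 0$ on $[0,\infty)$. With this metric the geodesic disc $D(r)$ is the Euclidean disc $\{|z|<r\}$, the Green function of $\Delta_{\mathbb{C}}/2$ on $D(r)$ with pole at $o=0$ is $g_r(0,z)=\tfrac1\pi\log\frac{r}{|z|}$, and the harmonic measure on $\partial D(r)$ is $d\pi^r_0=\tfrac{d\theta}{2\pi}$. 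As observed in the Remark preceding the definition of $\mathrm{Nev}$, an integration by parts then identifies $T_{f,L}(r)$, $m_f(r,D)$ and $N_f(r,D)$ with the classical Nevanlinna functions of $f$ on $\mathbb{C}$; in particular the quantities appearing in the Corollary are exactly those furnished by Theorem~\ref{thm1}.

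Applying Theorem~\ref{thm1} to this $S$ and using $\kappa\equiv 0$ gives, for $r>1$ outside a set of finite Lebesgue measure,
$$
m_f(r,D)\le \mathrm{Nev}(L,D)\,T_{f,L}(r)+o\big(T_{f,L}(r)\big)+O\big(\log^+\log r\big).
$$
It remains only to absorb the residual $O(\log^+\log r)$ into $o\big(T_{f,L}(r)\big)$, for which it suffices to establish $T_{f,L}(r)\ge c\log r+O(1)$ with some constant $c>0$. If $\mathrm{Nev}(L,D)=\infty$ the asserted inequality is vacuous, so assume $\mathrm{Nev}(L,D)<\infty$; by definition this forces $\dim_{\mathbb{C}}H^0(X,kL)\ge 2$ for some $k>0$. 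Choosing linearly independent sections $s_0,s_1\in H^0(X,kL)$, the ratio $\psi:=s_1/s_0$ is a non-constant element of the function field of $X$; since $f(\mathbb{C})$ is Zariski-dense it is contained neither in the indeterminacy locus of $\psi$ nor in any level set of $\psi$, so $\psi\circ f$ is a non-constant meromorphic function on $\mathbb{C}$. Hence $T(r,\psi\circ f)\ge\log r+O(1)$ by basic one-variable Nevanlinna theory, while the functoriality of the characteristic function together with $\psi^*\mathcal{O}_{\mathbb{P}^1}(1)=kL-B$ for the (effective) base divisor $B$ of $\{s_0,s_1\}$ gives $T(r,\psi\circ f)\le T_{f,kL}(r)+O(1)$. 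Therefore $T_{f,L}(r)=\tfrac1k T_{f,kL}(r)\ge\tfrac1k\log r+O(1)$, and in particular $\log^+\log r=o\big(T_{f,L}(r)\big)$; this is exactly the lower bound $T_{f,L}(r)\ge O(\log r)$ mentioned in the remark after Theorem~\ref{thm1}.

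Substituting this estimate into the displayed inequality collapses the two error terms into a single $o\big(T_{f,L}(r)\big)$, and we conclude
$$
m_f(r,D)\le \mathrm{Nev}(L,D)\,T_{f,L}(r)+o\big(T_{f,L}(r)\big)\big\|,
$$
which is precisely the Corollary. The argument is a direct specialization of Theorem~\ref{thm1}, and the only step requiring any genuine (though routine) verification is the lower bound $T_{f,L}(r)\ge c\log r+O(1)$: this is where Zariski-density of the image is needed, to ensure $f$ really detects the linear system $|kL|$ rather than collapsing it, and it is what permits discarding the $\log\log r$ correction arising from the exceptional set in Theorem~\ref{thm1}.
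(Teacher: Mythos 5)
Your proposal is correct and follows essentially the same route as the paper: specialize Theorem~\ref{thm1} to $S=\mathbb{C}$ with the flat metric (so $\kappa\equiv 0$ and the Nevanlinna functions reduce to the classical ones) and absorb the residual $O(\log^+\log r)$ using the growth bound $T_{f,L}(r)\gtrsim\log r$ for a Zariski-dense curve. The only difference is that you supply a proof of that lower bound (via a nonconstant quotient of sections of $kL$), which the paper merely asserts in the remark preceding the corollary.
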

Theorem \ref{thm1} implies a defect relation
 \begin{cor}\label{}  The same conditions are assumed as in Theorem $\ref{thm1}$. Let $f:S\rightarrow X$ be a holomorphic curve with
Zariski-dense image satisfying  
$$\liminf_{r\rightarrow\infty}\frac{\kappa(r)r^2}{T_{f,L}(r)}=0.$$
Then
  $$\delta_f(D)\leq {\rm{Nev}}(L,D).$$
\end{cor}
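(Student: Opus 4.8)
The plan is to deduce the corollary directly from Theorem~\ref{thm1} by dividing the inequality there by $T_{f,L}(r)$ and passing to the limit $r\to\infty$. We may assume ${\rm Nev}(L,D)<\infty$, since otherwise there is nothing to prove; then $\dim_{\mathbb C}H^0(X,kL)\ge 2$ for some $k>0$, and the Zariski-density of $f$ forces $T_{f,L}(r)\to\infty$. Recall that the defect is $\delta_f(D)=\liminf_{r\to\infty}m_f(r,D)/T_{f,L}(r)$. For $r>1$ outside the exceptional set $E$ of finite Lebesgue measure supplied by Theorem~\ref{thm1}, dividing the estimate there by $T_{f,L}(r)>0$ yields
\[
\frac{m_f(r,D)}{T_{f,L}(r)}\ \le\ {\rm Nev}(L,D)+o(1)+O\!\left(\frac{-\kappa(r)r^2}{T_{f,L}(r)}\right)+O\!\left(\frac{\log^{+}\log r}{T_{f,L}(r)}\right)\ \big\|.
\]

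The first step is to note that the curvature error term in fact tends to $0$ along \emph{all} of $r\to\infty$, not merely along a subsequence. Indeed $\kappa\le 0$ and $T_{f,L}(r)>0$ for large $r$, so the ratio $\kappa(r)r^2/T_{f,L}(r)$ is eventually $\le 0$; hence its $\limsup$ is $\le 0$, while the standing hypothesis asserts its $\liminf$ equals $0$. Therefore $\lim_{r\to\infty}\kappa(r)r^2/T_{f,L}(r)=0$, i.e.\ $-\kappa(r)r^2=o\big(T_{f,L}(r)\big)$. For the remaining term I would invoke the standard lower bound on the growth of the characteristic function of a non-constant holomorphic curve --- that $T_{f,L}(r)$ outgrows $\log^{+}\log r$, the analogue for a general $S$ of the bound $T_{f,L}(r)\ge O(\log r)$ noted after Theorem~\ref{thm1} for $S=\CC$ --- so that $\log^{+}\log r=o\big(T_{f,L}(r)\big)$ as well; when $-\kappa(r)r^2\to\infty$ this is automatic anyway, since then $T_{f,L}(r)/(-\kappa(r)r^2)\to\infty$ by the previous sentence.

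It follows that the right-hand side of the displayed inequality converges to ${\rm Nev}(L,D)$ as $r\to\infty$ through $r\notin E$. Since $|E|<\infty$ the set $[1,\infty)\setminus E$ is unbounded, and for any real-valued function the $\liminf$ over all large $r$ is at most the $\liminf$ over large $r\notin E$; hence
\[
\delta_f(D)=\liminf_{r\to\infty}\frac{m_f(r,D)}{T_{f,L}(r)}\ \le\ \liminf_{\substack{r\to\infty\\ r\notin E}}\frac{m_f(r,D)}{T_{f,L}(r)}\ \le\ {\rm Nev}(L,D),
\]
which is the claimed defect relation. The one genuinely non-formal point is the upgrade of the curvature hypothesis from a $\liminf$ to a true limit via the sign of $\kappa$, together with the elementary growth lower bound for $T_{f,L}(r)$; everything else is the routine bookkeeping of dividing an inequality of Second Main Theorem type by the characteristic function and discarding a set of finite Lebesgue measure.
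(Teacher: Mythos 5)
Your argument is correct and is the deduction the paper intends (the corollary is stated without an explicit proof): divide the inequality of Theorem~\ref{thm1} by $T_{f,L}(r)$, check that every error term is $o(1)$, and pass to the $\liminf$ over $r\notin E$, using that the infimum over a set bounds the infimum over a subset. Your observation that the sign of $\kappa$ promotes the $\liminf$ hypothesis to a genuine limit is a nice and correct remark. One point deserves sharpening: there is no ``standard lower bound'' $T_{f,L}(r)\gg\log^{+}\log r$ for a holomorphic curve on an \emph{arbitrary} open Riemann surface $S$ (on a hyperbolic $S$ the characteristic function can remain bounded); what actually closes the gap is the dichotomy hidden in your fallback. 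Since $\kappa$ is nonpositive and decreasing, either $\kappa\equiv 0$ --- which forces $K_S\equiv 0$, so after lifting to the universal cover the domain is $\CC$ and the classical growth $T_{f,L}(r)\ge O(\log r)$ applies --- or $\kappa(r)\le\kappa(r_0)<0$ for $r\ge r_0$, so $-\kappa(r)r^2\to\infty$ and the curvature hypothesis already yields $T_{f,L}(r)\gg r^2\gg\log^{+}\log r$. With that clarification your proof is complete.
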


\section{First Main Theorem}
\subsection{Stochastic formulas}~

We would use the stochastic method to study  value distribution theory for Riemann surfaces. To start with, we  introduce  Brownian motion and Dynkin formula \cite{13,itoo}.  It is known that the Dynkin formula plays a similar role as Green-Jensen formula \cite{ru}. Indeed,  the co-area formula is also introduced.

 Let  $(M,g)$ be a Riemannian manifold with  Laplace-Beltrami operator $\Delta_M$ associated to  $g.$  For $x\in M,$ we denote by $B_x(r)$ the geodesic ball centered at $x$ with radius $r,$ and denote by $S_x(r)$ the geodesic sphere centered at $x$ with radius $r.$
 By Sard's theorem, $S_x(r)$ is a submanifold of $M$ for almost every $r>0.$
A Brownian motion $X_t$ in $M$
is a heat diffusion  process  generated by $\frac{1}{2}\Delta_M$ with transition density function $p(t,x,y)$ which is the minimal positive fundamental solution of the  heat equation
  $$\frac{\partial}{\partial t}u(t,x)-\frac{1}{2}\Delta_{M}u(t,x)=0.$$
We denote by $\mathbb P_x$ the law of $X_t$ started at $x\in M$
 and by $\mathbb E_x$ the corresponding expectation with respect to $\mathbb P_x.$

  Let $D$  be a bounded domain with   smooth boundary $\partial D$ in $M$.
Fix $x\in D,$  we use $d\pi^{\partial D}_x$ to denote the harmonic measure  on $\partial D$ with respect to $x.$
This measure is a probability measure.
 Set
$$\tau_D:=\inf\big\{t>0:X_t\not\in D\big\}$$
which is a stopping time.
Denoted by $g_D(x,y)$  the Green function of $\Delta_M/2$ for  $D$ with a pole at $x$ and Dirichlet boundary condition, namely
$$-\frac{1}{2}\Delta_{M,y}g_D(x,y)=\delta_x(y), \ y\in D; \ \ g_D(x,y)=0, \ y\in \partial D,$$
where $\delta_x$ is the Dirac function.
 For $\phi\in \mathscr{C}_{\flat}(D)$
 (space of bounded continuous functions on $D$), the \emph{co-area formula} \cite{bass} asserts  that
$$ \mathbb{E}_x\left[\int_0^{\tau_D}\phi(X_t)dt\right]=\int_{D}g_{D}(x,y)\phi(y)dV(y).
$$
From Proposition 2.8 in \cite{bass}, we also have the relation of harmonic measures and hitting times that
\begin{equation}\label{hello123}
  \mathbb{E}_x\left[\psi(X_{\tau_{D}})\right]=\int_{\partial D}\psi(y)d\pi_x^{\partial D}(y)
\end{equation}
for any $\psi\in\mathscr{C}(\overline{D})$.

Let $u\in\mathscr{C}_\flat^2(M)$ (space of bounded $\mathscr{C}^2$-class functions on $M$), we have the famous \emph{It\^o formula} (see \cite{at,13, NN,itoo})
$$u(X_t)-u(x)=B\left(\int_0^t\|\nabla_Mu\|^2(X_s)ds\right)+\frac{1}{2}\int_0^t\Delta_Mu(X_s)dt, \ \ \mathbb P_x-a.s.$$
where $B_t$ is the standard  Brownian motion in $\mathbb R$ and $\nabla_M$ is gradient operator on $M$.
Take expectation of both sides of the above formula, it  follows \emph{Dynkin formula} (see \cite{at,itoo})
$$ \mathbb E_x[u(X_T)]-u(x)=\frac{1}{2}\mathbb E_x\left[\int_0^T\Delta_Mu(X_t)dt\right]
$$
for a stopping time $T$ such that each term  makes sense.

\noindent\textbf{Remark.} Thanks to expectation ``$\mathbb E_x$", the Dynkin formula, co-area formula and (\ref{hello123}) still work when $u, \phi$ or 
$\psi$ has a pluripolar set of 
singularities.
\subsection{First Main Theorem}~

Let $S$ be a complete open Riemann surface with  K\"ahler form $\alpha$ associated to  Hermitian metric $g.$
 Fix $o\in S,$ we
let $X_t$ be the Brownian
motion with generator $\Delta_S/2$ started at $o\in S.$ Moreover,  set a stopping time $$\tau_r=\inf\big\{t>0: X_t\not\in D(r)\big\}.$$
Let $$f:S\rightarrow X$$  be a holomorphic curve into a complex projective variety $X.$
Let $L\rightarrow X$
be an ample holomorphic line bundle equipped with Hermitian metric $h.$
 Apply co-area formula, we have
 $$T_{f,L}(r)=-\frac{1}{4}\mathbb E_o\left[\int_0^{\tau_r}\Delta_S\log h\circ f(X_t)dt\right].$$
 A relation of harmonic measures and hitting times implies that
 $$m_f(r,D)=\mathbb E_o\big[\lambda_D\circ f(X_{\tau_r})\big].$$

We here give the First Main Theorem of a holomorphic curve $f:S\rightarrow M$
 such that $f(o)\not\in {\rm{Supp}}D,$ where $D$ is an effective Cartier divisor on $X.$
Apply Dynkin formula to $\lambda_D\circ f(x),$
 $$\mathbb E_o\big[\lambda_D\circ f(X_{\tau_r})\big]-\lambda_D\circ f(o)=\frac{1}{2}
 \mathbb E_o\left[\int_0^{\tau_r}\Delta_S\lambda_D\circ f(X_t)dt\right].$$
The first term on the left hand side of the above equality is equal to $m_f(r,D),$ and the term on
the right hand side equals
$$\frac{1}{2}\mathbb E_o\left[\int_0^{\tau_r}\Delta_S\lambda_D\circ f(X_t)dt\right]=
\frac{1}{2}\int_{D(r)}g_r(o,x)\Delta_S\log\frac{1}{\|s_D\circ f(x)\|}dV(x)$$
due to co-area formula. Since $\|s_D\|^2=h|\tilde{s}_D|^2,$ where $h$ is a Hermitian metric on $L_D,$ we get
\begin{eqnarray*}
\frac{1}{2}\mathbb E_o\left[\int_0^{\tau_r}\Delta_S\lambda_D\circ f(X_t)dt\right] &=&
-\frac{1}{4}\int_{D(r)}g_r(o,x)\Delta_S\log h\circ f(x)dV(x) \\
&&-\frac{1}{4}\int_{D(r)}g_r(o,x)\Delta_S\log|\tilde{s}_D\circ f(x)|^2dV(x) \\
&=& T_{f,D}(r)-N_f(r,D).
\end{eqnarray*}
Therefore, we obtain
$$\text{F. M. T.}  \ \ \
  T_{f,D}(r)=m_f(r,D)+N_f(r,D)+O(1).$$
\textbf{Remark.}  $N_f(r,D)$ is of  a probabilistic expression 
$$N_f(r,D)=\lim_{\lambda\rightarrow\infty}\lambda\PP_o\left(\sup_{0\leq t\leq\tau_r}\log\frac{1}{\|s_D\circ f(X_t)\|}>\lambda\right).$$
\section{Logarithmic Derivative Lemma}

Let $(S,g)$ be a simply-connected and complete open Riemann surface with  Gauss curvature  $K_S\leq0$ associated to $g.$ By uniformization theorem,  there exists  a nowhere-vanishing holomorphic  vector field $\mathfrak X$ on $S.$
\subsection{Calculus Lemma}~

Let $\kappa$ be defined by (\ref{kappa}). As is noted before,
$\kappa$ is a non-positive, decreasing continuous function  on $[0,\infty).$
  Associate the ordinary differential equation
  \begin{equation}\label{G}
    G''(t)+\kappa (t)G(t)=0; \ \ \ G(0)=0, \ \ G'(0)=1.
  \end{equation}
 We compare (\ref{G})  with $y''(t)+\kappa(0)y(t)=0$ under the same  initial conditions,
 $G$ can be easily estimated  as
$$G(t)=t \ \ \text{for}  \ \kappa\equiv0; \ \ \ G(t)\geq t \ \ \text{for} \ \kappa\not\equiv0.$$
This implies that
\begin{equation}\label{vvvv}
  G(r)\geq r \ \ \text{for} \ r\geq0; \ \ \ \int_1^r\frac{dt}{G(t)}\leq\log r \ \ \text{for} \ r\geq1.
\end{equation}
On the other hand, we  rewrite (\ref{G}) as the form
$$\log'G(t)\cdot\log'G'(t)=-\kappa(t).$$
Since $G(t)\geq t$ is increasing,
then the decrease and non-positivity of $\kappa$ imply that for each fixed $t,$ $G$  must satisfy one of the following two inequalities
$$\log'G(t)\leq\sqrt{-\kappa(t)} \ \ \text{for} \ t>0; \ \ \ \log'G'(t)\leq\sqrt{-\kappa(t)} \ \ \text{for} \ t\geq0.$$
By virtue of $G(t)\rightarrow0$ as $t\rightarrow0,$ by integration, $G$ is bounded from above by
\begin{equation}\label{v2}
  G(r)\leq r\exp\big(r\sqrt{-\kappa(r)}\big) \ \  \text{for} \ r\geq0.
\end{equation}

 The main result of this subsection is the following 
\begin{theorem}[Calculus Lemma]\label{cal}
 Let $k\geq0$ be a locally integrable  function on $S$ such that it is locally bounded at $o\in S.$
 Then for any $\delta>0,$ there is a constant $C>0$ independent of $k,\delta,$ and a subset $E_\delta\subseteq(1,\infty)$ of finite Lebesgue measure such that
$$
\mathbb E_o\big{[}k(X_{\tau_r})\big{]}
\leq \frac{F(\hat{k},\kappa,\delta)e^{r\sqrt{-\kappa(r)}}\log r}{2\pi C}\mathbb E_o\left[\int_0^{\tau_{r}}k(X_{t})dt\right]
$$
  holds for $r>1$ outside $E_\delta,$  where  $\kappa$ is defined by $(\ref{kappa})$ and $F$ is defined by
$$
F\big{(}\hat{k},\kappa, \delta\big{)}
=\Big\{\log^+\hat{k}(r)\cdot\log^+\Big(re^{r\sqrt{-\kappa(r)}}\hat{k}(r)\big\{\log^{+}\hat{k}(r)\big\}^{1+\delta}\Big)\Big\}^{1+\delta} \ \ \ \
$$with
$$\hat k(r)=\frac{\log r}{C}\mathbb E_o\left[\int_0^{\tau_{r}}k(X_{t})dt\right].$$
Moreover, we have the estimate
$$\log F(\hat{k},\kappa,\delta)
\leq O\Big(\log^+\log \mathbb E_o\left[\int_0^{\tau_{r}}k(X_{t})dt\right]+\log^+r\sqrt{-\kappa(r)}+\log^+\log r\Big).
$$
\end{theorem}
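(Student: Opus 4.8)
\emph{Proof proposal.} The plan is to turn the two stochastic quantities into analytic quantities attached to the geodesic disc $D(r)$, prove a two‑sided comparison for its Green function from the hypotheses $K_S\le0$ and $(\ref{kappa})$, and then run the classical Borel (``calculus'') argument twice. Throughout write $\Lambda(r):=\mathbb E_o\big[\int_0^{\tau_r}k(X_t)\,dt\big]$, which by the co‑area formula equals $\int_{D(r)}g_r(o,x)k(x)\,dV(x)$, and $A_k(r):=\int_{D(r)}k\,dV$. By the relation of harmonic measure and hitting times, $\mathbb E_o[k(X_{\tau_r})]=\int_{\partial D(r)}k\,d\pi^r_o$, and $d\pi^r_o=\tfrac12\big(-\partial_\nu g_r(o,\cdot)\big)\,d\sigma_r$ with $\nu$ the outward normal and $d\sigma_r$ arc‑length on $\partial D(r)$ (the standard relation between harmonic measure and the normal derivative of the Green function of $\Delta_S/2$, consistent with the explicit computation on $\mathbb C$ recorded in the Remark of Section 1; here $\partial D(r)$ is a submanifold for a.e.\ $r$ by Sard). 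Since $k$ is locally bounded at $o$, $A_k(s)=O(s^2)$ near $o$, so $\Lambda$ and all integrals below are finite and $A_k,\Lambda$ are non‑decreasing; I shall treat the principal case $\Lambda(r)\to\infty$.

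The geometric core is a two‑sided bound for $g_r(o,\cdot)$. As $(S,g)$ is simply connected, complete and $K_S\le0$, Cartan--Hadamard provides global geodesic polar coordinates about $o$ with $g=d\rho^2+J^2\,d\theta^2$, $J''+K_SJ=0$, $J(0,\cdot)=0$, $\partial_\rho J(0,\cdot)=1$, and $\Delta_S\rho=\partial_\rho J/J$. The comparison theorems for $\Delta_S\rho$ — the Hessian/Rauch comparison using $K_S\le0$ and the Laplacian comparison using $K_S(x)\ge\kappa(\rho(x))$ — give $1/\rho\le\Delta_S\rho\le G'(\rho)/G(\rho)$ with $G$ as in $(\ref{G})$. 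Inserting the radial barriers $\tfrac1\pi\log\frac r\rho$ and $\tfrac1\pi\int_\rho^r\frac{ds}{G(s)}$, which both carry the logarithmic pole of $g_r(o,\cdot)$ at $o$ and vanish on $\partial D(r)$, into the maximum principle on $D(r)\setminus\{o\}$ yields
$$\frac1\pi\int_{\rho(x)}^r\frac{ds}{G(s)}\ \le\ g_r(o,x)\ \le\ \frac1\pi\log\frac{r}{\rho(x)},\qquad x\in D(r).$$
The upper bound, differentiated along $\nu$ on $\partial D(r)$ (where both sides vanish), gives $-\partial_\nu g_r(o,\cdot)\le 1/(\pi r)$, hence $\mathbb E_o[k(X_{\tau_r})]\le\tfrac1{2\pi r}\int_{\partial D(r)}k\,d\sigma_r=\tfrac1{2\pi r}A_k'(r)$, the last equality being the co‑area formula for the distance function $\rho$ (for a.e.\ $r$). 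The lower bound, integrated against $k\,dV$ and processed by the co‑area formula in the variable $s$, gives $\Lambda(r)\ge\tfrac1\pi\int_0^r\frac{A_k(s)}{G(s)}\,ds=:\tfrac1\pi B(r)$, so $B'(r)=A_k(r)/G(r)$ and $B\le\pi\Lambda$.

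It remains to iterate the Borel lemma: for a non‑decreasing absolutely continuous $\phi$, one has $\phi'(r)\le\phi(r)\{\log^+\phi(r)\}^{1+\delta}$ outside a set of finite Lebesgue measure (integrate $\phi'/(\phi\{\log^+\phi\}^{1+\delta})$, with the customary harmless adjustment on the range where $\log^+\phi$ is too small). Applied to $\phi=B$ it gives $A_k(r)=G(r)B'(r)\le\pi G(r)\Lambda(r)\{\log^+(\pi\Lambda(r))\}^{1+\delta}$; applied to $\phi=A_k$ it gives $A_k'(r)\le A_k(r)\{\log^+A_k(r)\}^{1+\delta}$; the union of the two exceptional sets, enlarged by an initial bounded interval, is $E_\delta$. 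Substituting into $\mathbb E_o[k(X_{\tau_r})]\le A_k'(r)/(2\pi r)$ and bounding $G(r)\le re^{r\sqrt{-\kappa(r)}}$ by $(\ref{v2})$ yields an inequality of exactly the asserted shape: the two nested $\log^+$'s and the two exponents $1+\delta$ combine into $F(\hat k,\kappa,\delta)$ with $\hat k(r)=\tfrac{\log r}{C}\Lambda(r)$, the factor $e^{r\sqrt{-\kappa(r)}}$ records the bending of the metric, and the leftover $\log r$ together with the normalization constant $C$ (arising from the two‑dimensional Green function, independent of $k$ and $\delta$) are absorbed as slack — for instance $\pi\Lambda(r)\le\hat k(r)$ as soon as $\log r\ge\pi C$. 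The closing estimate for $\log F$ is bookkeeping: apply $\log^+(ab)\le\log^+a+\log^+b+\log2$, $\log^+(a^{1+\delta})=(1+\delta)\log^+a$ and $\log^+\hat k(r)\le\log^+\log r+\log^+\Lambda(r)+O(1)$ repeatedly, which collapses the expression to $O\big(\log^+\log\mathbb E_o[\int_0^{\tau_r}k(X_t)\,dt]+\log^+r\sqrt{-\kappa(r)}+\log^+\log r\big)$.

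The main obstacle is the geometric step. On $\mathbb C$ the Green function of $D(r)$ is the explicit $\tfrac1\pi\log\frac{r}{|z|}$, the comparison is vacuous, and the classical calculus lemma drops out of Fubini plus one integration by parts; on a general Riemann surface one must substitute the Jacobi/comparison estimates above for the explicit formula, and it is precisely the non‑radiality of $g_r(o,\cdot)$ — controlled only through $G$, hence through $(\ref{v2})$ — that forces $e^{r\sqrt{-\kappa(r)}}$ into the conclusion. Lesser points needing care are the a.e.\ regularity of $\partial D(r)$, the behavior at the pole $o$ (where local boundedness of $k$ enters), and the fact recorded in the Remark after the Dynkin formula that the co‑area and maximum‑principle manipulations tolerate the mild singularities of $g_r(o,\cdot)$.
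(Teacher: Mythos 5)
Your proposal is correct and, at bottom, follows the same strategy as the paper: convert both stochastic quantities to integrals over $D(r)$ via co-area and hitting times, establish an upper bound for the harmonic measure in terms of the arc-length measure and a lower bound for the Green function $g_r(o,\cdot)$ in terms of the radial model governed by $G$, and then apply the Borel growth lemma (Lemma~\ref{cal1}) twice to the resulting monotone functions. Your identifications $\mathbb E_o[k(X_{\tau_r})]\le\frac{1}{2\pi r}A_k'(r)$ and $\Lambda(r)\ge\frac{1}{\pi}B(r)$ with $B(r)=\int_0^r A_k(s)G(s)^{-1}ds$ coincide (after a Fubini swap) with the paper's use of $\Lambda(r)=\int_0^r dt\int_t^r G(s)^{-1}ds\int_{\partial D(t)}k\,d\sigma_t$ and $\Lambda'(r)G(r)=A_k(r)$, and the double Borel step and final bookkeeping for $\log F$ match the paper's.

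The one genuine divergence is in how the two geometric inputs are obtained. The paper imports them as black boxes: the harmonic-measure bound $d\pi^r_o\le\frac{1}{2\pi r}d\sigma_r$ from Debiard--Gaveau--Mazet, and the Green-function lower bound from Atsuji's Lemma~\ref{zz}, $g_r(o,x)\int_\eta^r G(t)^{-1}dt\ge C\int_{\rho(x)}^r G(t)^{-1}dt$. You instead re-derive both from a single maximum-principle comparison: using Cartan--Hadamard polar coordinates and the Laplacian comparison $1/\rho\le\Delta_S\rho\le G'(\rho)/G(\rho)$, the barriers $\frac1\pi\log\frac{r}{\rho}$ (superharmonic) and $\frac1\pi\int_\rho^r G(s)^{-1}ds$ (subharmonic), both vanishing on $\partial D(r)$ with the matching $\frac1\pi\log\frac1\rho$ pole at $o$, sandwich $g_r(o,\cdot)$; the normal derivative of the upper barrier gives the harmonic-measure bound, and the lower barrier gives the cleaner inequality $g_r(o,x)\ge\frac1\pi\int_{\rho(x)}^r G(s)^{-1}ds$ without the $\bigl(\int_\eta^r G^{-1}\bigr)^{-1}$ normalization of Lemma~\ref{zz}. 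This is a legitimate, more self-contained route; it is even slightly sharper in the Green-function step (the paper afterwards uses $\int_1^r G^{-1}\le\log r$ to discharge the extra factor, while in your version the $\log r$ enters only through the normalization of $\hat k$). Two small points deserve a sentence in a polished write-up: (i) the variable-coefficient Sturm comparison (e.g.\ the Wronskian $J'G-JG'$ has non-positive derivative when $K_S\ge\kappa$) is what justifies $\Delta_S\rho\le G'/G$ with a non-constant $\kappa$, and (ii) removability of the singularity at $o$ for the bounded superharmonic differences $g_r-\frac1\pi\phi$ and $\frac1\pi\log\frac{r}{\rho}-g_r$ is what lets the maximum principle close without a loose additive constant.
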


To prove  theorem \ref{cal}, we need  some lemmas.
\begin{lemma}[\cite{atsuji}]\label{zz} Let $\eta>0$ be a constant. Then there is  a constant $C>0$ such that for
$r>\eta$ and $x\in B_o(r)\setminus \overline{B_o(\eta)}$
  $$g_r(o,x)\int_{\eta}^r\frac{dt}{G(t)}\geq C\int_{r(x)}^r\frac{dt}{G(t)}$$
 holds,  where  $G$ be defined by {\rm{(\ref{G})}}.
\end{lemma}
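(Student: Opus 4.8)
The plan is to derive the inequality from comparison geometry together with the maximum principle. First I would use that, by hypothesis in this section, $(S,g)$ is simply connected, complete and has $K_S\le 0$, so the Cartan--Hadamard theorem applies: $\exp_o\colon T_oS\to S$ is a diffeomorphism, the distance function $r(x)=d(o,x)$ is smooth on $S\setminus\{o\}$ with $|\nabla_S r|\equiv 1$, and no geodesic from $o$ has conjugate points. Consequently, for a radial function $v(x)=\psi(r(x))$ one has $\Delta_S v=\psi''(r(x))+\psi'(r(x))\,\Delta_S r(x)$, so the whole problem reduces to controlling $\Delta_S r$ from above.

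The key step is a Laplacian comparison against the variable curvature floor $\kappa$. Fix $x\ne o$ and let $\gamma$ be the unit-speed minimizing geodesic from $o$ to $x$; since $\gamma(t)\in\overline{D(t)}$ we have $K_S(\gamma(t))\ge\kappa(t)$ for all $t$. In geodesic polar coordinates around $o$ the metric is $dt^2+\mathcal J(t,\theta)^2d\theta^2$ with $\mathcal J''+K_S(\gamma(t))\mathcal J=0$, $\mathcal J(0)=0$, $\partial_t\mathcal J(0)=1$, and $\Delta_S r(x)=\partial_t\mathcal J/\mathcal J$ at $t=r(x)$. Comparing the Riccati quantities $\partial_t\mathcal J/\mathcal J$ and $G'/G$ (with $G$ the solution of $(\ref{G})$) by the Sturm comparison theorem — here $K_S\le 0$ guarantees $\mathcal J>0$ for every $t>0$, so both logarithmic derivatives are defined everywhere — yields
$$\Delta_S r(x)\le\frac{G'(r(x))}{G(r(x))},\qquad x\in S\setminus\{o\}.$$
Taking $\psi(s)=\int_s^{r}dt/G(t)$, so that $\psi'(s)=-1/G(s)<0$ and $\psi''(s)=G'(s)/G(s)^2$, and multiplying the comparison by the negative factor $\psi'$, one gets $\Delta_S v\ge\psi''(r(x))-G'(r(x))/G(r(x))^2=0$; hence
$$v(x)=\int_{r(x)}^{r}\frac{dt}{G(t)}$$
is subharmonic on $D(r)\setminus\{o\}$ and vanishes on $\partial D(r)$.

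Now I would apply the minimum principle on the annulus $A=D(r)\setminus\overline{B_o(\eta)}$. Let $m=\min_{\partial B_o(\eta)}g_r(o,\cdot)>0$. The function
$$\Psi=g_r(o,\cdot)-\frac{m}{\int_\eta^{r}dt/G(t)}\,v$$
is superharmonic on $A$ (a harmonic function minus a positive multiple of a subharmonic one), continuous on $\overline A$, equals $0$ on $\partial D(r)$ and is $\ge 0$ on $\partial B_o(\eta)$ by the choice of $m$; the minimum principle gives $\Psi\ge 0$ on $A$, i.e.
$$g_r(o,x)\int_\eta^{r}\frac{dt}{G(t)}\ \ge\ m\int_{r(x)}^{r}\frac{dt}{G(t)},\qquad x\in B_o(r)\setminus\overline{B_o(\eta)}.$$
Since $g_r(o,\cdot)$ increases with $r$ (monotonicity of Green functions of an increasing exhaustion, again by the maximum principle on the smaller disc), $m=m(r)$ is nondecreasing, so once $r$ is bounded away from $\eta$ it is bounded below by a fixed constant $C>0$, which is the assertion. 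I would also note the sharper radial estimate obtained by comparing $\pi g_r(o,\cdot)$ and $v$ directly across the pole: both carry the same logarithmic singularity $-\log r(x)$ at $o$, so their difference is a bounded superharmonic function on the punctured disc, extends across $o$ by the removable-singularity theorem, and is $\ge 0$ on $\partial D(r)$, whence $\pi g_r(o,x)\ge\int_{r(x)}^{r}dt/G(t)$ throughout $D(r)$.

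The main obstacle is the Laplacian comparison in the second step: one must phrase Sturm's theorem so that it tolerates a model equation with $t$-dependent, merely continuous coefficient $\kappa(t)$ (so $G\in C^2$ but no smoother), and must verify that $K_S\le 0$ genuinely rules out conjugate points along every geodesic from $o$ so that the Riccati comparison runs for all $t>0$ rather than only up to a first zero of $\mathcal J$. A lesser point worth flagging is the honest dependence of $C$ on $\eta$ and the need to keep $r$ away from $\eta$ — already on $\mathbb C$ the stated inequality degenerates as $r\downarrow\eta$ — which should be respected wherever the lemma is invoked.
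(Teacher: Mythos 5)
The paper does not prove this lemma; it cites it from Atsuji \cite{atsuji} and invokes it as a black box in the proof of Theorem \ref{cal}. So there is no in-paper proof to compare against, and the question is whether your reconstruction is sound.

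Your argument is correct and is essentially the standard route. The Sturm/Riccati comparison is set up correctly: along the minimizing geodesic $\gamma$ from $o$, $K_S(\gamma(t))\ge\kappa(t)$ by the definition (\ref{kappa}), so with $\mathcal{J}''+K_S\mathcal{J}=0$ and $G''+\kappa G=0$, $\mathcal{J}(0)=G(0)=0$, $\mathcal{J}'(0)=G'(0)=1$, the Wronskian identity $(\mathcal{J}'G-\mathcal{J}G')'=(\kappa-K_S)\mathcal{J}G\le0$ with zero initial value gives $\mathcal{J}'/\mathcal{J}\le G'/G$, i.e.\ $\Delta_S r\le G'(r)/G(r)$; Cartan--Hadamard guarantees $\mathcal{J}>0$ for all $t>0$, so there is no blow-up to worry about, and continuity of $\kappa$ suffices for all the ODE manipulations since only the Wronskian integral is needed, not higher regularity of $G$. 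The resulting subharmonicity of $v(x)=\int_{r(x)}^r dt/G(t)$ and the comparison with $g_r(o,\cdot)$ on the annulus via the maximum principle are carried out correctly: $\Psi$ is superharmonic, vanishes on $\partial D(r)$, is nonnegative on $\partial B_o(\eta)$, hence nonnegative throughout. Your closing observation $\pi g_r(o,x)\ge\int_{r(x)}^r dt/G(t)$ (matching logarithmic singularities, removable singularity of the bounded superharmonic difference) is also right and actually subsumes the lemma: combined with $\int_\eta^r dt/G(t)\ge\int_\eta^{r_0}dt/G(t)$ for $r\ge r_0>\eta$ it directly yields the stated inequality with $C=\frac1\pi\int_\eta^{r_0}dt/G(t)$.

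The caveat you flag is genuine and worth retaining: as stated the lemma cannot hold with a single $C>0$ uniformly down to $r=\eta^+$ (in $\mathbb C$ it reduces to $\frac1\pi\log(r/\eta)\ge C$), so $C$ must be allowed to depend on a lower cutoff $r_0>\eta$. This is harmless for the paper, where the lemma is only used in asymptotic estimates as $r\to\infty$, but the clean statement should either fix such an $r_0$ or let $C$ depend on $r-\eta$. One other small point: when the paper applies the lemma in the proof of Theorem \ref{cal} it formally integrates $\int_0^r dt$ rather than $\int_\eta^r dt$; this is harmless because the omitted range $t<\eta$ only strengthens the lower bound, but your reconstruction correctly respects the domain of validity.
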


\begin{lemma}[\cite{ru}]\label{cal1} Let $T$ be a strictly positive nondecreasing  function of $\mathscr{C}^1$-class on $(0,\infty).$ Let $\gamma>0$ be a number such that $T(\gamma)\geq e,$ and $\phi$ be a strictly positive nondecreasing function such that
$$c_\phi=\int_e^\infty\frac{1}{t\phi(t)}dt<\infty.$$
Then, the inequality
  $T'(r)\leq T(r)\phi(T(r))$
holds for all $r\geq\gamma$ outside a subset of Lebesgue measure not exceeding $c_\phi.$ In particular, take $\phi(t)=\log^{1+\delta}t$ for a number  $\delta>0,$ then 
  $T'(r)\leq T(r)\log^{1+\delta}T(r)$
holds for all $r>0$ outside a subset $E_\delta\subseteq(0,\infty)$ of finite Lebesgue measure.
\end{lemma}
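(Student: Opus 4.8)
The plan is the classical ``calculus lemma'' (Borel-type growth lemma) argument: control the exceptional set by integrating the reciprocal of the offending ratio and then change variables $u=T(r)$. Concretely, let $E=\{r\geq\gamma:\ T'(r)>T(r)\phi(T(r))\}$, a measurable set since $T'$ is continuous and $\phi$ (being nondecreasing) is Borel. On $E$ the quantity $T'(r)/\big(T(r)\phi(T(r))\big)$ exceeds $1$, while everywhere on $[\gamma,\infty)$ it is nonnegative (because $T'\geq0$ by monotonicity, and $T,\phi>0$); hence
$$|E|\ \leq\ \int_E\frac{T'(r)}{T(r)\phi(T(r))}\,dr\ \leq\ \int_\gamma^\infty\frac{T'(r)}{T(r)\phi(T(r))}\,dr .$$

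To evaluate the right-hand side I would avoid the inverse of $T$ (which need not exist, $T$ being merely nondecreasing): pick a $\mathscr{C}^1$ antiderivative $\Psi$ of $v\mapsto 1/\big(v\phi(v)\big)$ on $[e,\infty)$; then $r\mapsto\Psi(T(r))$ is $\mathscr{C}^1$ on $[\gamma,\infty)$ with $\frac{d}{dr}\Psi(T(r))=T'(r)/\big(T(r)\phi(T(r))\big)$ by the chain rule, the composition being legitimate because $T(\gamma)\geq e$ together with monotonicity of $T$ forces $T(r)\geq e$ for $r\geq\gamma$. Thus, for every $R>\gamma$,
$$\int_\gamma^R\frac{T'(r)}{T(r)\phi(T(r))}\,dr=\Psi\big(T(R)\big)-\Psi\big(T(\gamma)\big)=\int_{T(\gamma)}^{T(R)}\frac{dv}{v\phi(v)}\leq\int_e^\infty\frac{dv}{v\phi(v)}=c_\phi ,$$
using $T(\gamma)\geq e$ and positivity of the integrand. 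Letting $R\to\infty$ gives $|E|\leq c_\phi<\infty$, which is the first assertion.

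For the ``in particular'' statement I would take $\phi(t)=\log^{1+\delta}t$, strictly positive and increasing on $(e,\infty)$ — the only range in which $\phi$ is ever evaluated above — and compute, via $w=\log t$, that $c_\phi=\int_e^\infty dt/(t\log^{1+\delta}t)=\int_1^\infty dw/w^{1+\delta}=1/\delta<\infty$. Since $T$ is strictly positive, nondecreasing, and (as in all applications) eventually exceeds $e$, one may fix a finite $\gamma$ with $T(\gamma)\geq e$ and set $E_\delta=(0,\gamma]\cup E$, of Lebesgue measure at most $\gamma+1/\delta<\infty$, outside which $T'(r)\leq T(r)\log^{1+\delta}T(r)$ holds for all $r>0$. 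The argument is otherwise routine; the only point deserving care — and the reason I route the change of variables through the antiderivative $\Psi$ rather than substituting $u=T(r)$ directly — is that $T$ is merely nondecreasing, so I anticipate no genuine obstacle here.
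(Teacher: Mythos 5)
Your argument is correct and is the standard Borel-type growth lemma proof; the paper gives no proof of this lemma (it is quoted from Ru's book \cite{ru}), and the cited proof is essentially the same estimate $|E|\leq\int_E T'(r)/\big(T(r)\phi(T(r))\big)\,dr\leq\int_{T(\gamma)}^{\infty}dv/\big(v\phi(v)\big)\leq c_\phi$. The only cosmetic point is that for merely nondecreasing $\phi$ the antiderivative $\Psi$ is locally Lipschitz (hence absolutely continuous) rather than $\mathscr{C}^1$, which is all the chain-rule/fundamental-theorem step requires, and in the actual application $\phi(t)=\log^{1+\delta}t$ is continuous anyway.
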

\emph{Proof of Theorem $\ref{cal}$}
\begin{proof}
The argument refers to Atsuji \cite{atsuji}. The simple-connectedness and the non-positivity of  Gauss curvature of $S$ imply the following relation (see \cite{Deb})
$$d\pi^r_{o}(x)\leq\frac{1}{2\pi r}d\sigma_r(x),$$
where  $d\sigma_r(x)$ is the induced volume measure on $\partial D(r).$
By  Lemma \ref{zz} and (\ref{vvvv}), we have
\begin{eqnarray*}
   \mathbb E_o\left[\int_0^{\tau_{r}}k(X_{t})dt\right] &=&
   \int_{D(r)}g_r(o,x)k(x)dV(x) \\ &=&\int_0^rdt\int_{\partial D(t)}g_r(o,x)k(x)d\sigma_t(x)  \\
&\geq& C\int_0^r\frac{\int_t^rG^{-1}(s)ds}{\int_1^rG^{-1}(s)ds}dt\int_{\partial D(t)}k(x)d\sigma_t(x) \\
&\geq& \frac{C}{\log r}\int_0^rdt\int_t^r\frac{ds}{G(s)}\int_{\partial D(t)}k(x)d\sigma_t(x), \\
 \mathbb E_o\big[k(X_{\tau_r})\big]&=&\int_{\partial D(r)}k(x)d\pi_o^r(x)\leq\frac{1}{2\pi r}\int_{\partial D(r)}k(x)d\sigma_r(x).
\end{eqnarray*}
Hence,
  \begin{eqnarray}\label{fr}
\mathbb E_o\left[\int_0^{\tau_{r}}k(X_{t})dt\right]&\geq&\frac{C}{\log r}\int_0^rdt\int_t^r\frac{ds}{G(s)}\int_{\partial D(o,t)}k(x)d\sigma_t(x), \nonumber \\
  \mathbb E_o\big[k(X_{\tau_r})\big]&\leq&\frac{1}{2\pi r}\int_{\partial D(r)}k(x)d\sigma_r(x).
\end{eqnarray}
 Set
$$\Lambda(r)=\int_0^rdt\int_t^r\frac{ds}{G(s)}\int_{\partial D(t)}k(x)d\sigma_t(x).$$
We conclude that
\begin{equation*}
 \Lambda(r)\leq\frac{\log r}{C}\mathbb E_o\left[\int_0^{\tau_{r}}k(X_{t})dt\right]=\hat{k}(r).
\end{equation*}
Since
$$\Lambda'(r)=\frac{1}{G(r)}\int_0^rdt\int_{\partial D(t)}k(x)d\sigma_t(x),$$
then it yields from (\ref{fr}) that
\begin{equation*}
  \mathbb E_o\big{[}k(X_{\tau_r})\big{]}\leq\frac{1}{2\pi r}\frac{d}{dr}\left(\Lambda'(r)G(r)\right).
\end{equation*}
Using Lemma \ref{cal1} twice with (\ref{v2}), then for any $\delta>0$
\begin{eqnarray*}
   && \frac{d}{dr}\left(\Lambda'(r)G(r)\right) \\
&\leq& G(r)\Big\{\log^+\Lambda(r)\cdot\log^+\left(G(r)\Lambda(r)\big\{\log^+\Lambda(r)\big\}^{1+\delta}\right)\Big\}^{1+\delta}\Lambda(r)  \\
&\leq& re^{r\sqrt{-\kappa(r)}}\Big\{\log^+\hat k(r)\cdot\log^+\Big(re^{r\sqrt{-\kappa(r)}}\hat k(r)\big\{\log^+\hat k(r)\big\}^{1+\delta}\Big)\Big\}^{1+\delta} \hat k(r) \\
&=& \frac{F\big{(}\hat k,\kappa,\delta\big{)}re^{r\sqrt{-\kappa(r)}}\log r}{C}\mathbb E_o\left[\int_0^{\tau_{r}}k(X_{t})dt\right] \ \ \
\end{eqnarray*}
holds outside a subset $E_\delta\subseteq(1,\infty)$ of finite Lebesgue measure.
Thus,
\begin{eqnarray*}
\mathbb E_o\big{[}k(X_{\tau_r})\big{]}
&\leq&\frac{F\big{(}\hat k,\kappa,\delta\big{)}e^{r\sqrt{-\kappa(r)}}\log r}{2\pi C}\mathbb E_o\left[\int_0^{\tau_{r}}k(X_{t})dt\right].
\end{eqnarray*}
Hence, we get the desired  inequality.
Indeed, for $r>1$  we compute that
$$
\log F(\hat k,\kappa,\delta)
\leq O\Big(\log^+\log^+\hat k(r)+\log^+r\sqrt{-\kappa(r)}+\log^+\log r\Big) \big\|$$
and
\begin{eqnarray*}
  \log^+\hat k(r)&\leq& \log \mathbb E_o\left[\int_0^{\tau_{r}}k(X_{t})dt\right]+\log^+\log r+O(1).
  \end{eqnarray*}
 We have arrived at the required estimate.
 \end{proof}
\subsection{Logarithmic Derivative Lemma}~

Let $\psi$ be a meromorphic function on $(S,g).$
The norm of the gradient of $\psi$ is defined by
$$\|\nabla_S\psi\|^2=\frac{1}{g}\left|\frac{\partial\psi}{\partial z}\right|^2$$
in a local coordinate $z.$ Locally, we write $\psi=\psi_1/\psi_0,$ where $\psi_0,\psi_1$ are local holomorphic functions without common zeros. Regard $\psi$  as a holomorphic mapping into $\mathbb P^1(\mathbb C)$  by
$x\mapsto[\psi_0(x):\psi_1(x)].$
We define
$$
T_\psi(r)=\frac{1}{4}\int_{D(r)}g_r(o,x)\Delta_S\log\big{(}|\psi_0(x)|^2+|\psi_1(x)|^2\big{)}dV(x)$$
and
$T(r,\psi):=m(r,\psi)+N(r,\psi)
$
with 
\begin{eqnarray*}
m(r,\psi)&=&\int_{\partial D(r)}\log^+|\psi(x)|d\pi^r_o(x), \\
N(r,\psi)&=&\pi \sum_{x\in \psi^{-1}(\infty)\cap D(r)}g_r(o,x).
\end{eqnarray*}
Let
  $i:\mathbb C\hookrightarrow\mathbb P^1(\mathbb C)$ be an inclusion defined by
 $z\mapsto[1:z].$  Via the pull-back by $i,$ we have a (1,1)-form $i^*\omega_{FS}=dd^c\log(1+|\zeta|^2)$ on $\mathbb C,$
 where $\zeta:=w_1/w_0$ and $[w_0:w_1]$ is
the homogeneous coordinate system of $\mathbb P^1(\mathbb C).$ The characteristic function of $\psi$ with respect to $i^*\omega_{FS}$ is defined by
$$\hat{T}_\psi(r) = \frac{1}{4}\int_{D(r)}g_r(o,x)\Delta_S\log(1+|\psi(x)|^2)dV(x).$$
Clearly, $\hat{T}_\psi(r)\leq T_\psi(r).$
We adopt the spherical distance $\|\cdot,\cdot\|$ on  $\mathbb P^1(\mathbb C),$ the proximity function of $\psi$  with respect to
$a\in \mathbb P^1(\mathbb C)$
is defined by
$$\hat{m}_\psi(r,a)=\int_{\partial D(r)}\log\frac{1}{\|\psi(x),a\|}d\pi_o^r(x).$$
Again,  set
$$\hat{N}_\psi(r,a)=\pi \sum_{x\in \psi^{-1}(a)\cap D(r)}g_r(o,x).$$
Then 
$\hat{T}_\psi(r)=\hat{m}_\psi(r,a)+\hat{N}_\psi(r,a)+O(1).$
 Note  that $m(r,\psi)=\hat{m}_\psi(r,\infty)+O(1),$ which  implies that
 $$
   T(r,\psi)=\hat{T}_\psi(r)+O(1), \ \ \ T\Big(r,\frac{1}{\psi-a}\Big)= T(r,\psi)+O(1).
$$
Hence, we arrive at 
\begin{equation}\label{relation}
  T(r,\psi)+O(1)=\hat{T}_\psi(r)\leq T_\psi(r)+O(1).
\end{equation}

We establish the following Logarithmic Derivative Lemma (LDL):
\begin{theorem}[LDL]\label{ldl2} Let $\psi$ be a nonconstant meromorphic function on $S.$ Let $\mathfrak{X}$ be a nowhere-vanishing holomorphic
 vector field on $S.$ Then
\begin{eqnarray*}
m\Big(r,\frac{\mathfrak{X}^k(\psi)}{\psi}\Big)  &\leq& \frac{3k}{2}\log T(r,\psi)+O\Big(\log^+\log T(r,\psi)-\kappa(r)r^2
+\log^+\log r\Big) \big{\|},
\end{eqnarray*}
where $\kappa$ is defined by $(\ref{kappa}).$
\end{theorem}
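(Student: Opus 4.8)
\emph{Proof proposal.} The plan is to reduce to $k=1$ and then run a stochastic analogue of the classical logarithmic derivative estimate. For the reduction, write $\mathfrak{X}^{k}(\psi)/\psi=\prod_{j=1}^{k}\mathfrak{X}^{j}(\psi)/\mathfrak{X}^{j-1}(\psi)$ with $\mathfrak{X}^{0}(\psi):=\psi$; if some $\phi_{j-1}:=\mathfrak{X}^{j-1}(\psi)$ is constant the left-hand side vanishes, so we may assume each $\phi_{j-1}$ is nonconstant, and then subadditivity of $\log^{+}$ gives $m(r,\mathfrak{X}^{k}(\psi)/\psi)\le\sum_{j=1}^{k}m(r,\mathfrak{X}(\phi_{j-1})/\phi_{j-1})$. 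Since the polar divisor of $\mathfrak{X}(\phi)/\phi$ is supported on $\phi^{-1}(0)\cup\phi^{-1}(\infty)$ with multiplicity one, the First Main Theorem yields $T(r,\mathfrak{X}(\phi))\le 3T(r,\phi)+m(r,\mathfrak{X}(\phi)/\phi)+O(1)$; feeding in the case $k=1$ (proved below) shows $T(r,\mathfrak{X}^{j}(\psi))=O(T(r,\psi))+O(-\kappa(r)r^{2}+\log^{+}\log r)$, hence $\log T(r,\phi_{j-1})\le\log T(r,\psi)+O(-\kappa(r)r^{2}+\log^{+}\log r)$ off a set of finite measure, which is absorbable. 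Thus it suffices to prove $m(r,\mathfrak{X}(\psi)/\psi)\le\tfrac32\log T(r,\psi)+O\big(-\kappa(r)r^{2}+\log^{+}\log T(r,\psi)+\log^{+}\log r\big)\big\|$, after which summing over $1\le j\le k$ absorbs the fixed factor into the error.

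For $k=1$ I would start from $m(r,\mathfrak{X}(\psi)/\psi)=\mathbb{E}_{o}[\log^{+}|\mathfrak{X}(\psi)/\psi|(X_{\tau_{r}})]$. Set $u:=\log(2+|\psi|^{2}+|\psi|^{-2})$ and $k:=|\mathfrak{X}(\psi)/\psi|^{2}/(1+u^{2})$. Combining $\log^{+}t\le\tfrac12\log(1+t^{2})$, the elementary bound $1+ab\le(1+a)(1+b)$, and concavity of $\log$ (Jensen for the probability measure $d\pi^{r}_{o}$, which is the law of $X_{\tau_{r}}$), one obtains
$$m\Big(r,\frac{\mathfrak{X}(\psi)}{\psi}\Big)\le\frac12\log^{+}\mathbb{E}_{o}[k(X_{\tau_{r}})]+\frac12\mathbb{E}_{o}[\log(1+u^{2})(X_{\tau_{r}})]+O(1).$$
The second summand is of lower order: using $u\le 2(\log^{+}|\psi|+\log^{+}|1/\psi|)+O(1)$, keeping the outer logarithm unsplit, and applying Jensen together with the First Main Theorem, $\tfrac12\mathbb{E}_{o}[\log(1+u^{2})(X_{\tau_{r}})]\le\log^{+}(m(r,\psi)+m(r,1/\psi))+O(1)\le\log^{+}T(r,\psi)+O(1)$. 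For the first summand, $k$ is locally integrable on $S$ (near a zero or pole of $\psi$ one has $|\mathfrak{X}(\psi)/\psi|^{2}\sim|z|^{-2}$ while $1+u^{2}\sim(\log|z|)^{2}$, so $k\sim|z|^{-2}(\log|z|)^{-2}$) and locally bounded at $o$, so the Calculus Lemma (Theorem~\ref{cal}) converts $\mathbb{E}_{o}[k(X_{\tau_{r}})]$ into a multiple of $\mathbb{E}_{o}[\int_{0}^{\tau_{r}}k(X_{t})\,dt]=\int_{D(r)}g_{r}(o,x)k(x)\,dV(x)$, the multiple being $e^{r\sqrt{-\kappa(r)}}\log r$ times a factor $F$ with $\log F=O(\log^{+}\log\mathbb{E}_{o}[\int_{0}^{\tau_{r}}k\,dt]+r\sqrt{-\kappa(r)}+\log^{+}\log r)$.

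The crux is the interior estimate $\int_{D(r)}g_{r}(o,x)k(x)\,dV(x)\le O(\sup_{D(r)}\|\mathfrak{X}\|^{2})\,T(r,\psi)+O(1)$. I would prove it by writing $|\mathfrak{X}(\psi)/\psi|^{2}$ in terms of $\|\mathfrak{X}\|^{2}$ and $\|\nabla_{S}\log|\psi|\|^{2}$, pulling $\|\mathfrak{X}\|^{2}$ out as its supremum on $D(r)$, and estimating $\int_{D(r)}g_{r}(o,x)\,\|\nabla_{S}\log|\psi|\|^{2}/(1+u^{2})\,dV(x)$ by integration by parts: applying the Dynkin formula to $w$ and to $w^{2}$, where $w$ is a suitably chosen at-most-slowly-growing function of $\log|\psi|$ whose squared gradient dominates the integrand, one reduces this integral to $\mathbb{E}_{o}[w^{2}(X_{\tau_{r}})]$ plus a remainder coming from $\Delta_{S}w$, and $\mathbb{E}_{o}[w^{2}(X_{\tau_{r}})]=O(T(r,\psi))$ follows from the First Main Theorem via $(\log^{+}\log^{+}t)^{2}\le\log^{+}t$. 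A curvature-comparison bound then gives $\log\sup_{D(r)}\|\mathfrak{X}\|^{2}=O(-\kappa(r)r^{2})$ — this is where the ``bending of the metric'' enters — and since $r\sqrt{-\kappa(r)}\le-\kappa(r)r^{2}+O(1)$, assembling the pieces yields $\tfrac12\log^{+}\mathbb{E}_{o}[k(X_{\tau_{r}})]\le\tfrac12\log T(r,\psi)+O(-\kappa(r)r^{2}+\log^{+}\log T(r,\psi)+\log^{+}\log r)$; together with the $\log^{+}T(r,\psi)$ from the second summand this produces the coefficient $\tfrac32$, and the reduction above then gives $\tfrac{3k}{2}$.

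The main obstacle is exactly this interior integration-by-parts step: one must choose the regularizing weight $1+u^{2}$ (equivalently the function $w$) so that the remainder generated by $\Delta_{S}w$ is genuinely absorbed rather than reproducing the integral with an inadmissible constant, and one must verify that every error contribution — from $F$, from $e^{r\sqrt{-\kappa(r)}}$, and from $\sup_{D(r)}\|\mathfrak{X}\|^{2}$ — fits inside $O(-\kappa(r)r^{2}+\log^{+}\log T(r,\psi)+\log^{+}\log r)$.
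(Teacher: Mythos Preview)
Your reduction to $k=1$ by telescoping and your use of the Calculus Lemma to pass from $\mathbb{E}_o[k(X_{\tau_r})]$ to the Green-weighted interior integral match the paper. The gap is in the treatment of $\|\mathfrak{X}\|^2$. You propose to pull $\|\mathfrak{X}\|^2$ out of the interior integral as $\sup_{D(r)}\|\mathfrak{X}\|^2$ and then invoke a curvature bound $\log\sup_{D(r)}\|\mathfrak{X}\|^2=O(-\kappa(r)r^2)$. This fails already on flat $\mathbb{C}$ (where $\kappa\equiv 0$): for $\mathfrak{X}=e^{z}\,\partial/\partial z$ one has $\log\sup_{D(r)}\|\mathfrak{X}\|^2=2r$, not $O(0)$. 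Writing $\mathfrak{X}=a\,\partial_z$, the decomposition $\log\|\mathfrak{X}\|^2=\log g+\log|a|^2$ has a harmonic second piece that can be arbitrarily large pointwise; no curvature hypothesis controls its supremum. The paper instead separates $\tfrac12\log^+\|\mathfrak{X}\|^2$ \emph{at the boundary}, before Jensen, as a third summand in
\[
\log^+\frac{|\mathfrak{X}(\psi)|}{|\psi|}\ \le\ \tfrac12\log^+\frac{\|\nabla_S\psi\|^2}{|\psi|^2(1+\log^2|\psi|)}+\tfrac12\log\big(1+\log^2|\psi|\big)+\tfrac12\log^+\|\mathfrak{X}\|^2,
\]
and bounds $C=\tfrac12\mathbb{E}_o[\log^+\|\mathfrak{X}(X_{\tau_r})\|^2]$ by Dynkin: since $\Delta_S\log|a|^2=0$ and $\Delta_S\log g=-4K_S$, the right-hand side reduces to $-\mathbb{E}_o\big[\int_0^{\tau_r}K_S\,dt\big]\le-\kappa(r)\,\mathbb{E}_o[\tau_r]$, after which a Bessel-process comparison on the simply connected cover gives $\mathbb{E}_o[\tau_r]\le r^2/2$. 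The harmonic piece averages out over $\partial D(r)$; it does not in your supremum.

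A second divergence: for the remaining interior term $\int_{D(r)}g_r(o,x)\frac{\|\nabla_S\psi\|^2}{|\psi|^2(1+\log^2|\psi|)}\,dV$ the paper avoids your integration-by-parts scheme entirely. It recognizes this integral as $2\pi\,T_\psi(r,\Phi)$ for the singular probability $(1,1)$-form $\Phi=\dfrac{1}{|\zeta|^2(1+\log^2|\zeta|)}\dfrac{\sqrt{-1}}{4\pi^2}\,d\zeta\wedge d\bar\zeta$ on $\mathbb{P}^1(\mathbb{C})$, and Fubini gives $T_\psi(r,\Phi)=\int_{\mathbb{P}^1(\mathbb{C})}N_\psi(r,\zeta)\,\Phi\le T(r,\psi)+O(1)$ in one line, dissolving the obstacle you flag.
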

 On $\mathbb P^1(\mathbb C),$ we take a singular metric
$$\Phi=\frac{1}{|\zeta|^2(1+\log^2|\zeta|)}\frac{\sqrt{-1}}{4\pi^2}d\zeta\wedge d\overline \zeta.$$
A direct computation gives that
\begin{equation*}\label{}
\int_{\mathbb P^1(\mathbb C)}\Phi=1, \ \ \ 2\pi\psi^*\Phi=\frac{\|\nabla_S\psi\|^2}{|\psi|^2(1+\log^2|\psi|)}\alpha.
\end{equation*}
Set
\begin{equation*}\label{ffww}
  T_\psi(r,\Phi)=\frac{1}{2\pi}\int_{D(r)}g_r(o,x)\frac{\|\nabla_S\psi\|^2}{|\psi|^2(1+\log^2|\psi|)}(x)dV(x).
\end{equation*}
 By Fubini's theorem
\begin{eqnarray*}
T_\psi(r,\Phi)
&=&\int_{D(r)}g_r(o,x)\frac{\psi^*\Phi}{\alpha}dV(x)  \\
&=&\pi\int_{\zeta\in\mathbb P^1(\mathbb C)}\Phi\sum_{x\in \psi^{-1}(\zeta)\cap D(r)}g_r(o,x) \\
&=&\int_{\zeta\in\mathbb P^1(\mathbb C)}N_{\psi}(r,\zeta)\Phi
\leq T(r,\psi)+O(1).
\end{eqnarray*}
We get
\begin{equation}\label{get}
  T_\psi(r,\Phi)\leq T(r,\psi)+O(1).
\end{equation}
\begin{lemma}\label{999a} Assume that $\psi(x)\not\equiv0.$ Then 
\begin{eqnarray*}
  && \frac{1}{2}\mathbb E_o\left[\log^+\frac{\|\nabla_S\psi\|^2}{|\psi|^2(1+\log^2|\psi|)}(X_{\tau_r})\right] \\
  &\leq&\frac{1}{2}\log T(r,\psi)+O\big{(}\log^+\log T(r,\psi)+r\sqrt{-\kappa(r)}+\log^+\log r\big{)}  \big{\|},
\end{eqnarray*}
 where $\kappa$ is defined by $(\ref{kappa}).$
\end{lemma}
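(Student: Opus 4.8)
The plan is to bound the left-hand side by an application of the Calculus Lemma (Theorem~\ref{cal}) to the density $k = \|\nabla_S\psi\|^2/\big(|\psi|^2(1+\log^2|\psi|)\big)$, which is exactly the density appearing in $T_\psi(r,\Phi)$, and then to control the resulting quantities using the inequality $T_\psi(r,\Phi)\le T(r,\psi)+O(1)$ from (\ref{get}). First I would observe that by concavity of $\log$ (Jensen's inequality applied to the probability measure $d\pi_o^r$),
$$
\frac12\mathbb E_o\!\left[\log^+\frac{\|\nabla_S\psi\|^2}{|\psi|^2(1+\log^2|\psi|)}(X_{\tau_r})\right]
\le \frac12\log^+\mathbb E_o\!\left[\frac{\|\nabla_S\psi\|^2}{|\psi|^2(1+\log^2|\psi|)}(X_{\tau_r})\right]+O(1),
$$
so it suffices to bound $\mathbb E_o[k(X_{\tau_r})]$ from above. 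Here $k\ge0$ is locally integrable and locally bounded at $o$ provided $\psi(o)\notin\{0,\infty\}$ (a harmless normalization, absorbed into the $O(1)$), so Theorem~\ref{cal} applies.

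Next I would feed the Calculus Lemma into this: for any fixed $\delta>0$ there is $C>0$ and an exceptional set $E_\delta$ of finite measure so that, outside $E_\delta$,
$$
\mathbb E_o\big[k(X_{\tau_r})\big]\le \frac{F(\hat k,\kappa,\delta)\,e^{r\sqrt{-\kappa(r)}}\log r}{2\pi C}\,\mathbb E_o\!\left[\int_0^{\tau_r}k(X_t)\,dt\right].
$$
By the co-area formula the integral expectation equals $2\pi\,T_\psi(r,\Phi)$ up to the normalizing constant in the definition of $T_\psi(r,\Phi)$, hence is $\le O\big(T(r,\psi)\big)+O(1)$ by (\ref{get}). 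Taking $\tfrac12\log^+$ of the whole bound, the product turns into a sum:
$$
\frac12\log^+\mathbb E_o[k(X_{\tau_r})]\le \frac12\log^+ T(r,\psi)+\frac12\log^+ F(\hat k,\kappa,\delta)+\frac12\,r\sqrt{-\kappa(r)}+\frac12\log^+\log r+O(1).
$$
The term $\tfrac12 r\sqrt{-\kappa(r)}$ is exactly the source of the $r\sqrt{-\kappa(r)}$ in the statement. It remains to absorb $\log^+F$.

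For that I would invoke the second conclusion of Theorem~\ref{cal}, namely
$$
\log F(\hat k,\kappa,\delta)\le O\Big(\log^+\log\mathbb E_o\big[\textstyle\int_0^{\tau_r}k(X_t)\,dt\big]+\log^+ r\sqrt{-\kappa(r)}+\log^+\log r\Big),
$$
and then use $\mathbb E_o[\int_0^{\tau_r}k(X_t)\,dt]\le O(T(r,\psi))+O(1)$ once more, so that $\log^+\log(\cdots)\le \log^+\log T(r,\psi)+O(1)$; also $\log^+ r\sqrt{-\kappa(r)}\le r\sqrt{-\kappa(r)}$. Collecting everything gives
$$
\frac12\mathbb E_o\!\left[\log^+\frac{\|\nabla_S\psi\|^2}{|\psi|^2(1+\log^2|\psi|)}(X_{\tau_r})\right]\le \frac12\log T(r,\psi)+O\big(\log^+\log T(r,\psi)+r\sqrt{-\kappa(r)}+\log^+\log r\big)\ \big\|,
$$
which is the claimed bound. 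The main obstacle I anticipate is purely bookkeeping: checking that $k$ genuinely satisfies the hypotheses of the Calculus Lemma (local integrability near the zeros and poles of $\psi$, where $k$ has at worst a pluripolar singular set — here the ``Remark'' after the Dynkin/co-area formulas is what licenses applying these expectation identities through such singularities), and carefully matching the normalizing constants between $\mathbb E_o[\int_0^{\tau_r}k\,dt]$, $T_\psi(r,\Phi)$, and $T(r,\psi)$ so that no spurious factor survives outside the $O(1)$. There is no serious analytic difficulty beyond what Theorem~\ref{cal} already packages.
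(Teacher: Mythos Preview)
Your proposal is correct and follows essentially the same route as the paper: Jensen's inequality to pass from $\mathbb E_o[\log^+ k(X_{\tau_r})]$ to $\log^+\mathbb E_o[k(X_{\tau_r})]$, then the Calculus Lemma (Theorem~\ref{cal}) applied to $k=\|\nabla_S\psi\|^2/\big(|\psi|^2(1+\log^2|\psi|)\big)$, and finally the bound $T_\psi(r,\Phi)\le T(r,\psi)+O(1)$ from (\ref{get}) to control both the main term and $\log F(\hat k,\kappa,\delta)$. The paper's argument is identical up to the bookkeeping details you already flagged.
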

\begin{proof} By Jensen's inequality, it is clear that
\begin{eqnarray*}
   \mathbb E_o\left[\log^+\frac{\|\nabla_S\psi\|^2}{|\psi|^2(1+\log^2|\psi|)}(X_{\tau_r})\right]
   &\leq&  \mathbb E_o\left[\log\Big{(}1+\frac{\|\nabla_S\psi\|^2}{|\psi|^2(1+\log^2|\psi|)}(X_{\tau_r})\Big{)}\right] \nonumber \\
    &\leq& \log^+\mathbb E_o\left[\frac{\|\nabla_S\psi\|^2}{|\psi|^2(1+\log^2|\psi|)}(X_{\tau_r})\right]+O(1). \nonumber
\end{eqnarray*}
By Lemma \ref{cal} and (\ref{get})
\begin{eqnarray*}
   && \log^+\mathbb E_o\left[\frac{\|\nabla_S\psi\|^2}{|\psi|^2(1+\log^2|\psi|)}(X_{\tau_r})\right]  \\
   &\leq& \log^+\mathbb E_o\left[\int_0^{\tau_r}\frac{\|\nabla_M\psi\|^2}{|\psi|^2(1+\log^2|\psi|)}(X_{t})dt\right]
   +\log \frac{F(\hat{k},\kappa,\delta)e^{r\sqrt{-\kappa(r)}}\log r}{2\pi C}
    \\
   &\leq& \log T_\psi(r,\Phi)+\log F(\hat k,\kappa,\delta)+r\sqrt{-\kappa(r)}+\log^+\log r+O(1) \\
    &\leq& \log T(r,\psi)+O\Big(\log^+\log^+\hat k(r)+r\sqrt{-\kappa(r)}+\log^+\log r\Big) \big\|,
\end{eqnarray*}
where
$$\hat k(r)=\frac{\log r}{C}\mathbb E_o\left[\int_0^{\tau_{r}}\frac{\|\nabla_S\psi\|^2}{|\psi|^2(1+\log^2|\psi|)}(X_{t})dt\right].$$
Indeed, we note that
$$
\hat k(r)=\frac{2\pi \log r}{C}T_\psi(r,\Phi)\leq \frac{2\pi\log r}{C} T(r,\psi).
$$
Then we  have the desired inequality.
\end{proof}
We first prove LDL for the first-order derivative:
\begin{theorem}[LDL]\label{ldl1} Let $\psi$ be a nonconstant meromorphic function on $S.$ Let $\mathfrak{X}$ be a nowhere-vanishing holomorphic vector filed on $S.$ Then
\begin{eqnarray*}
m\Big(r,\frac{\mathfrak{X}(\psi)}{\psi}\Big)&\leq&\frac{3}{2}\log T(r,\psi)+O\Big{(}\log^+\log T(r,\psi)-\kappa(r)r^2+\log^+\log r\Big{)} \big{\|},
\end{eqnarray*}
where $\kappa$ is defined by $(\ref{kappa}).$
\end{theorem}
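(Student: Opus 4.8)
The plan is to deduce Theorem \ref{ldl1} from Lemma \ref{999a} by a pointwise comparison between $\mathfrak{X}(\psi)/\psi$ and the gradient quantity controlled there. Since $\mathfrak{X}$ is the vector field coming from the uniformizing coordinate $w$ (in which $\mathfrak{X}=\partial/\partial w$ and $ds^2=2g_w\,dw\,d\overline w$), the non-negative function $\|\mathfrak{X}\|^2:=g(\mathfrak{X},\overline{\mathfrak{X}})=g_w$ is globally defined and satisfies $|\mathfrak{X}(\psi)|^2=\|\mathfrak{X}\|^2\,\|\nabla_S\psi\|^2$. Factoring $1+\log^2|\psi|$ out of $|\mathfrak{X}(\psi)/\psi|^2$ and using $\log^+(abc)\le\log^+a+\log^+b+\log^+c$ gives, pointwise on $\partial D(r)$,
\begin{eqnarray*}
\log^+\Big|\frac{\mathfrak{X}(\psi)}{\psi}\Big|
&\leq&\frac12\log^+\frac{\|\nabla_S\psi\|^2}{|\psi|^2(1+\log^2|\psi|)}
+\frac12\log^+\big(1+\log^2|\psi|\big)+\frac12\log^+\|\mathfrak{X}\|^2.
\end{eqnarray*}
Integrating against the harmonic measure $d\pi_o^r$, equivalently applying $\mathbb{E}_o[\,\cdot\,(X_{\tau_r})]$ through $(\ref{hello123})$, reduces the theorem to estimating the three resulting averages.

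For the first average I would quote Lemma \ref{999a} directly: it bounds it by $\frac12\log T(r,\psi)+O(\log^+\log T(r,\psi)+r\sqrt{-\kappa(r)}+\log^+\log r)$ for $r$ outside a set of finite Lebesgue measure. For the second, I would use $1+\log^2|\psi|\le(1+|\log|\psi||)^2$ and $|\log|\psi||\le\log^+|\psi|+\log^+|1/\psi|$, then the concavity of $t\mapsto\log(1+t)$ (Jensen's inequality) together with the First Main Theorem (which gives $m(r,\psi)\le T(r,\psi)+O(1)$ and $m(r,1/\psi)\le T(r,1/\psi)+O(1)=T(r,\psi)+O(1)$) to get
$$\frac12\int_{\partial D(r)}\log^+\big(1+\log^2|\psi|\big)\,d\pi_o^r\le\log T(r,\psi)+O(1).$$
These two contributions already account for the coefficient $\frac12+1=\frac32$ in front of $\log T(r,\psi)$.

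It remains to absorb the third average $\frac12\int_{\partial D(r)}\log^+\|\mathfrak{X}\|^2\,d\pi_o^r$, which records the bending of the metric of $S$ and which I expect to be $O(-\kappa(r)r^2)$. To get this I would compare the conformal factor $\|\mathfrak{X}\|^2=g_w$ on $\partial D(r)$ with the constant-curvature model by means of the ODE $(\ref{G})$ and the bound $(\ref{v2})$, obtaining $\log^+\|\mathfrak{X}\|^2=O\big(\log r+r\sqrt{-\kappa(r)}\big)$ on $\partial D(r)$, and then argue by cases: if $\kappa\equiv0$ then $S$ is isometric to $\mathbb{C}$ with the flat metric and the term is $O(1)$; if $\kappa\not\equiv0$ then, $\kappa$ being non-positive and decreasing, $-\kappa(r)\ge c_0>0$ for all large $r$, so $\log r+r\sqrt{-\kappa(r)}=O(-\kappa(r)r^2)$. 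Putting the three pieces together and using $r\sqrt{-\kappa(r)}\le 1-\kappa(r)r^2$ yields
$$m\Big(r,\frac{\mathfrak{X}(\psi)}{\psi}\Big)\leq\frac32\log T(r,\psi)+O\Big(\log^+\log T(r,\psi)-\kappa(r)r^2+\log^+\log r\Big)\big\|.$$

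The step I expect to be the main obstacle is the last one: making rigorous the comparison between the uniformizing conformal factor and the curvature data (in the spirit of Atsuji's estimates and of Lemma \ref{zz}), and in particular verifying that the bending term is genuinely dominated by $-\kappa(r)r^2$ and does not leave a stray $\log r$ — it is the monotonicity of $\kappa$ that rules this out. Everything else is routine Nevanlinna bookkeeping.
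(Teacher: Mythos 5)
Your decomposition of $m(r,\mathfrak X(\psi)/\psi)$ into the three averages $A$, $B$, $C$ — with $A$ the gradient quantity from Lemma~\ref{999a}, $B$ the $\log(1+\log^2|\psi|)$ piece handled by Jensen and the First Main Theorem, and $C=\tfrac12\int_{\partial D(r)}\log^+\|\mathfrak X\|^2\,d\pi_o^r$ — is exactly the paper's decomposition, and your treatment of $A$ and $B$ matches the paper's word for word. The divergence, and the genuine gap, is in how you propose to bound $C$.

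You want a \emph{pointwise} estimate $\log^+\|\mathfrak X\|^2=O(\log r+r\sqrt{-\kappa(r)})$ on $\partial D(r)$ by comparing ``the conformal factor with the constant-curvature model via the ODE $(\ref{G})$ and $(\ref{v2})$.'' But the Sturm comparison attached to $(\ref{G})$ controls the coefficient $J(r,\theta)$ of the metric written in \emph{geodesic polar} coordinates $dr^2+J^2d\theta^2$; it gives $J(r,\theta)\leq G(r)$ on $\partial D(r)$. The quantity $\|\mathfrak X\|^2=g$ is the conformal factor in the \emph{uniformizing} coordinate, which differs from $J$ by the Jacobian of the change of coordinates between $w$ and $(r,\theta)$, and nothing in the curvature hypotheses controls that Jacobian pointwise. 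Indeed $\log g$ is merely subharmonic (since $\Delta_S\log g=-4K_S\geq0$), and a subharmonic function on the uniformizing $\mathbb C$ or $\mathbb D$ can be large on a thin set of $\partial D(r)$ without violating any radial curvature bound, so I do not see how a pointwise inequality of the claimed shape could follow; you yourself flag this step as ``the main obstacle,'' and it is in fact where the argument breaks.

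The paper sidesteps this entirely by estimating $C$ in the \emph{integral} (rather than pointwise) sense: it applies the Dynkin formula to $\log^+\|\mathfrak X\|^2$, uses the observation that with $\mathfrak X=a\,\partial/\partial z$ and $a$ nowhere-vanishing holomorphic one has $\Delta_S\log\|\mathfrak X\|^2=\Delta_S\log g+\Delta_S\log|a|^2=\Delta_S\log g=-4K_S\geq0$ (so $\log\|\mathfrak X\|$ is subharmonic and the Dynkin term is one-signed), and then bounds
$$C\leq\tfrac14\,\mathbb E_o\Big[\int_0^{\tau_r}\Delta_S\log\|\mathfrak X(X_t)\|^2\,dt\Big]+O(1)=-\mathbb E_o\Big[\int_0^{\tau_r}K_S(X_t)\,dt\Big]+O(1)\leq-\kappa(r)\,\mathbb E_o[\tau_r]+O(1).$$
The last ingredient, $\mathbb E_o[\tau_r]\leq r^2/2$, is Lemma~\ref{yyyy}, proved by comparing $r(X_t)$ with the $2$-dimensional Bessel process via It\^o's formula and the Laplacian comparison $\Delta_S r\geq 1/r$ (valid since $S$ is simply connected with $K_S\leq0$). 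To repair your write-up you should replace the pointwise comparison for $C$ with this stochastic argument; everything else in your proposal is correct and already aligned with the paper.
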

\begin{proof} Write $\mathfrak{X}=a\frac{\partial}{\partial z},$ then $\|\mathfrak{X}\|^2=g|a|^2.$  We have
\begin{eqnarray*}
 m\Big(r,\frac{\mathfrak{X}(\psi)}{\psi}\Big)
&=& \int_{\partial D(r)}\log^+\frac{|\mathfrak{X}(\psi)|}{|\psi|}(x)d\pi^r_o(x) \\
&\leq& \frac{1}{2}\int_{\partial D(r)}\log^+\frac{|\mathfrak{X}(\psi)|^2}{\|\mathfrak{X}\|^2|\psi|^2(1+\log^2|\psi|)}(x)d\pi^r_o(x) \\
&&+
\frac{1}{2}\int_{\partial D(r)}\log(1+\log^2|\psi(x)|)d\pi^r_o(x)+\frac{1}{2}\int_{\partial D(r)}\log^+\|\mathfrak{X}_x\|^2d\pi^r_o(x) \\
&:=& A+B+C.
\end{eqnarray*}
We next handle $A,
B,C$ respectively.  For $A,$ it yields from Lemma \ref{999a} that
\begin{eqnarray*}
A&=& \frac{1}{2}\int_{\partial D(r)}\log^+\frac{|a|^2\left|\frac{\partial \psi}{\partial z}\right|^2}{g|a|^2|\psi|^2(1+\log^2|\psi|)}(x)d\pi^r_o(x) \\
&=& \frac{1}{2}\int_{\partial D(r)}\log^+\frac{\|\nabla_S\psi\|^2}{|\psi|^2(1+\log^2|\psi|)}(x)d\pi^r_o(x) \\
 &\leq&\frac{1}{2}\log T(r,\psi)+O\Big{(}\log^+\log T(r,\psi)+r\sqrt{-\kappa(r)}+\log^+\log r\Big{)} \big{\|}.
\end{eqnarray*}
For $B,$ the Jensen's inequality implies that
\begin{eqnarray*}
B &\leq&  \int_{\partial D(r)}\log\Big(1+\log^+|\psi(x)|+\log^+\frac{1}{|\psi(x)|}\Big)d\pi^r_o(x) \\
&\leq& \log\int_{\partial D(r)}\Big(1+\log^+|\psi(x)|+\log^+\frac{1}{|\psi(x)|}\Big)d\pi^r_o(x) \\
&\leq& \log T(r,\psi)+O(1).
\end{eqnarray*}
Finally, we estimate $C.$ By the condition,  $\|\mathfrak{X}\|>0.$ Since $S$ is non-positively curved and $a$ is holomorphic, then $\log\|\mathfrak{X}\|$ is subharmonic, i.e., $\Delta_S\log\|\mathfrak X\|\geq0.$
Clearly, we have
$$\Delta_S\log^+\|\mathfrak{X}\|\leq \Delta_S\log \|\mathfrak{X}\|$$
for $x\in S$ satisfying $\|\mathfrak{X}_x\|\neq1.$ Notice that  
$\log^+\|\mathfrak{X}_x\|=0$
for $x\in S$ satisfying $\|\mathfrak{X}_x\|\leq1.$ 
Thus, by Dynkin formula we have
\begin{eqnarray}\label{ok}
\ \ \ \ \ C &=& \frac{1}{2}\mathbb E_o\left[\log^+\|\mathfrak{X}(X_{\tau_r})\|^2\right] \\
&\leq&  \frac{1}{4}\mathbb E_o\left[\int_0^{\tau_r}\Delta_S\log\|\mathfrak{X}(X_t)\|^2dt \right]+O(1) \nonumber \\
 &=& \frac{1}{4}\mathbb E_o\left[\int_0^{\tau_r}\Delta_S\log g(X_t)dt \right]+
 \frac{1}{4}\mathbb E_o\left[\int_0^{\tau_r}\Delta_S\log|a(X_t)|^2dt \right]+O(1) \nonumber \\
 &=& -\mathbb E_o\left[\int_0^{\tau_r}K_S(X_t)dt \right]+O(1) \nonumber \\
 &\leq& -\kappa(r)\mathbb E_o\big[\tau_r\big]+O(1), \nonumber
\end{eqnarray}
where we use the fact $K_S=-(\Delta_S\log g)/4.$ Thus, we prove the theorem by using  $\mathbb E_o\left[\tau_r\right]\leq r^2/2$ which is due
to Lemma \ref{yyyy} below.
\end{proof}
 \begin{lemma}\label{yyyy} Let $X_t$ be a Brownian motion in a simply-connected complete Riemann surface $S$ of non-positive Gauss curvature. Then
$$\mathbb E_o\big[\tau_r\big]\leq\frac{r^2}{2}.$$
\end{lemma}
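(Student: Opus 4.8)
The plan is to apply the Dynkin formula to the square of the Riemannian distance function from $o$. Since $S$ is simply-connected, complete, and has $K_S\le 0$, it is a Cartan--Hadamard surface: the exponential map $\exp_o\colon T_oS\to S$ is a diffeomorphism, the cut locus of $o$ is empty, the distance $\rho(x):=d(o,x)$ is smooth on $S\setminus\{o\}$ with $\|\nabla_S\rho\|\equiv 1$ there, and $\rho^2$ is smooth on all of $S$ (in normal coordinates centered at $o$ it equals $|v|^2$). First I would record the Laplacian bound $\Delta_S\rho^2\ge 4$ on $S$: on $S\setminus\{o\}$ the Hessian comparison theorem for non-positive sectional curvature gives $\operatorname{Hess}\rho\ge \rho^{-1}(g-d\rho\otimes d\rho)$, and tracing (the real dimension being $2$) yields $\Delta_S\rho\ge \rho^{-1}$; hence
\[
\Delta_S\rho^2=2\rho\,\Delta_S\rho+2\|\nabla_S\rho\|^2\ \ge\ 2+2\ =\ 4\qquad\text{on }S\setminus\{o\},
\]
while $\Delta_S\rho^2(o)=4$ directly from normal coordinates, so $\Delta_S\rho^2\ge 4$ everywhere on $S$.

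Next I would feed $u=\rho^2$ into the Dynkin formula with the stopping time $\tau_r\wedge N$, truncating at an auxiliary level $N<\infty$ so that each term is finite (if one prefers a globally bounded $\mathscr{C}^2$ test function, one may instead replace $\rho^2$ outside $\overline{D(r+1)}$ by a bounded $\mathscr{C}^2$ extension, which alters neither the stopped process nor any quantity occurring inside $D(r)$). Since $X_{\tau_r\wedge N}\in\overline{D(r)}$ almost surely, we have $\rho^2(X_{\tau_r\wedge N})\le r^2$, and using $\rho(o)=0$ the Dynkin formula gives
\[
r^2\ \ge\ \mathbb E_o\big[\rho^2(X_{\tau_r\wedge N})\big]-\rho^2(o)\ =\ \frac12\,\mathbb E_o\!\left[\int_0^{\tau_r\wedge N}\Delta_S\rho^2(X_t)\,dt\right]\ \ge\ 2\,\mathbb E_o[\tau_r\wedge N].
\]
Letting $N\to\infty$ and applying the monotone convergence theorem then yields $\mathbb E_o[\tau_r]\le r^2/2$, as claimed.

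The hard part, such as it is, is the comparison-geometry input — the smoothness of $\rho^2$ on all of $S$ and the inequality $\Delta_S\rho^2\ge 4$ — which is precisely where the hypotheses of simple-connectedness and $K_S\le 0$ are used; once that is in hand, the probabilistic step is a routine application of Dynkin's formula together with the fact that the exit position $X_{\tau_r}$ lies on $\partial D(r)$.
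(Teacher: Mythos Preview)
Your proof is correct, but it takes a different and more direct route than the paper's. The paper applies the It\^o formula to the distance function $\rho(x)=r(x)$ itself, obtains the pathwise inequality $r(X_t)\ge B_t+\tfrac12\int_0^t r(X_s)^{-1}\,ds$ from $\Delta_S\rho\ge\rho^{-1}$ and the vanishing of the local time on cut loci, and then invokes a stochastic comparison theorem with the $2$-dimensional Bessel process $W_t$ (the radial part of planar Brownian motion) to get $W_t\le r(X_t)$ almost surely; this yields $\tau_r\le\iota_r:=\inf\{t:W_t\ge r\}$, and the bound follows from the elementary identity $\mathbb E_0[\iota_r]=r^2/2$ obtained by applying Dynkin's formula to $|x|^2$ in $\mathbb R^2$.

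By contrast, you bypass the SDE comparison entirely by applying Dynkin's formula to $\rho^2$ directly on $S$, using the same Laplacian comparison $\Delta_S\rho\ge\rho^{-1}$ only to conclude $\Delta_S\rho^2\ge4$. This is shorter and avoids quoting a stochastic comparison lemma; it also sidesteps the need to discuss local time, since $\rho^2$ is smooth across $o$. The paper's approach, on the other hand, yields the stronger pathwise statement $W_t\le r(X_t)$, which is more than is needed here but is of independent interest. Both arguments rest on the same geometric input (Cartan--Hadamard structure and the Hessian/Laplacian comparison), so the difference is purely in the probabilistic packaging.
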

\begin{proof}  We refer to arguments of  Atsuji \cite{atsuji}. Apply It$\rm{\hat{o}}$ formula to $r(x)$
\begin{equation}\label{kiss}
  r(X_t)=B_t-L_t+\frac{1}{2}\int_0^t\Delta_Sr(X_s)ds,
\end{equation}
where $B_t$ is the standard Brownian motion in $\mathbb R,$  $L_t$ is the local time on cut locus of $o,$ an increasing process
which increases only at cut loci of $o.$ Since $S$ is simply connected and  non-positively  curved, then
$$\Delta_Sr(x)\geq\frac{1}{r(x)},\ \ L_t\equiv0.$$
By (\ref{kiss}), we arrive at 
$$r(X_t)\geq B_t+\frac{1}{2}\int_0^t\frac{ds}{r(X_s)}.$$
Associate the stochastic differential equation
$$ dW_t=dB_t+\frac{1}{2}\frac{dt}{W_t}, \ \ W_0=0,$$
where $B_t$ is the  standard Brownian motion in $\mathbb R,$ and
 $W_t$ is the $2$-dimensional Bessel process  defined as the Euclidean norm of Brownian motion in
$\mathbb R^2.$
By the standard comparison arguments of stochastic differential equations, one gets that
\begin{equation}\label{yss}
  W_t\leq r(X_t)
\end{equation}
almost surely. Set
$$\iota_r=\inf\big{\{}t>0:W_t\geq r\big{\}},$$
which is a stopping time. From (\ref{yss}), we can verify that
$ \iota_r\geq\tau_r.$
This implies
\begin{equation}\label{ssy}
  \mathbb E_o[\iota_r]\geq\mathbb E_o[\tau_r].
\end{equation}
Since $W_t$ is the Euclidean norm of the Brownian motion in $\mathbb R^{2}$ starting from the origin, then applying Dynkin formula to $W_t^2$ we have
$$\mathbb E_o[W_{\iota_r}^2]=\frac{1}{2}\mathbb E_o\left[\int_0^{\iota_r}\Delta_{\mathbb R^{2}} W_t^2dt\right]=2\mathbb E_o[\iota_r],$$
where $\Delta_{\mathbb R^{2}}$ is the Laplace operator  on $\mathbb R^{2}.$
Using (\ref{yss}) and (\ref{ssy}), we conclude that
$$r^2=\mathbb E_o[r^2]=2\mathbb E_o[\iota_r]\geq2\mathbb E_o[\tau_r].$$
This certifies the assertion.
\end{proof}
\emph{Proof of Theorem $\ref{ldl2}$}
\begin{proof} Note that
\begin{eqnarray*}
m\Big(r,\frac{\mathfrak{X}^k(\psi)}{\psi}\Big)&\leq& \sum_{j=1}^k m\Big(r,\frac{\mathfrak{X}^j(\psi)}{\mathfrak{X}^{j-1}(\psi)}\Big).
\end{eqnarray*}
Therefore, we finish the proof by using  Lemma \ref{hello1} below.
\end{proof}
\begin{lemma}\label{hello1} We have
\begin{eqnarray*}
m\Big(r,\frac{\mathfrak{X}^{k+1}(\psi)}{\mathfrak{X}^{k}(\psi)}\Big)
&\leq& \frac{3}{2}\log T(r,\psi)+O\Big(\log^+\log T(r,\psi)-\kappa(r)r^2+\log^+\log r\Big) \big{\|},
\end{eqnarray*}
 where $\kappa$ is defined by $(\ref{kappa}).$
\end{lemma}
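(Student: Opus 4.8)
The statement I want is Lemma \ref{hello1}: an estimate for $m(r,\mathfrak{X}^{k+1}(\psi)/\mathfrak{X}^k(\psi))$ of the same shape as the first-order LDL in Theorem \ref{ldl1}. The natural strategy is a reduction: treat $\mathfrak{X}^k(\psi)$ as an auxiliary meromorphic function $\varphi$ on $S$ and apply Theorem \ref{ldl1} (the first-order LDL) to $\varphi$ in place of $\psi$. This immediately gives
\[
m\Big(r,\frac{\mathfrak{X}(\varphi)}{\varphi}\Big)\leq \frac{3}{2}\log T(r,\varphi)+O\big(\log^+\log T(r,\varphi)-\kappa(r)r^2+\log^+\log r\big)\,\big\|,
\]
and $\mathfrak{X}(\varphi)/\varphi=\mathfrak{X}^{k+1}(\psi)/\mathfrak{X}^k(\psi)$. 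So everything reduces to controlling $T(r,\mathfrak{X}^k(\psi))$ by $T(r,\psi)$ up to admissible error terms; concretely, I expect to prove by induction on $k$ the bound $T(r,\mathfrak{X}^k(\psi))\leq (1+o(1))T(r,\psi)+O(-\kappa(r)r^2+\log^+\log r)\,\|$, or at worst $T(r,\mathfrak{X}^k(\psi))=O\big(T(r,\psi)\big)+O(-\kappa(r)r^2+\cdots)$, which is all that is needed since only $\log T(r,\varphi)$ enters.

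**Key steps, in order.** First, set $\varphi=\mathfrak{X}^k(\psi)$ and note $\varphi$ is meromorphic and nonconstant (if $\mathfrak{X}^k(\psi)$ were constant the claimed $m$-term would be $O(1)$ and there is nothing to prove). Second, establish the comparison $T(r,\mathfrak{X}(\varphi))\leq 2\,T(r,\varphi)+O(\text{error})$: by the First Main Theorem and (\ref{relation}) it suffices to bound $\hat T_{\mathfrak{X}(\varphi)}(r)$ or, more simply, to use the standard Nevanlinna trick $T(r,\mathfrak X(\varphi))\leq T(r,\varphi)+m\big(r,\mathfrak{X}(\varphi)/\varphi\big)+O(1)$ together with the first-order LDL applied to $\varphi$; since $m(r,\mathfrak{X}(\varphi)/\varphi)=O(\log T(r,\varphi))+O(-\kappa(r)r^2+\log^+\log r)\,\|$, induction yields $T(r,\mathfrak{X}^k(\psi))\leq (1+o(1))T(r,\psi)+O(-\kappa(r)r^2+\log^+\log r)\,\|$. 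Third, feed this back: $\log T(r,\mathfrak{X}^k(\psi))\leq \log T(r,\psi)+o(1)$ outside the exceptional set (using $\log(1+o(1))=o(1)$ and $\log^+$ of the curvature/iterated-log terms being absorbed into $O(\log^+\log T(r,\psi)-\kappa(r)r^2+\log^+\log r)$). Fourth, apply Theorem \ref{ldl1} to $\varphi=\mathfrak{X}^k(\psi)$ and substitute the bound on $\log T(r,\varphi)$ to obtain the stated inequality, noting that the finitely many exceptional sets (one per inductive step, plus the one from Theorem \ref{ldl1}) union to a set of finite Lebesgue measure, consistent with the $\|$ notation.

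**Main obstacle.** The delicate point is the bookkeeping of exceptional sets and error terms through the induction: each application of Theorem \ref{ldl1} and each use of Lemma \ref{cal1} introduces its own set of finite measure, and I must check that the curvature term $-\kappa(r)r^2$ does not accumulate a growing constant factor as $k$ increases — it does not, because at each step the LDL contributes only an additive $O(-\kappa(r)r^2)$, and $k$ is fixed, so the total is still $O(-\kappa(r)r^2)$ with a constant depending on $k$ only. A second subtlety is justifying $T(r,\mathfrak{X}(\varphi))\leq T(r,\varphi)+m(r,\mathfrak{X}(\varphi)/\varphi)+O(1)$ in the present setting: this follows from the First Main Theorem (Section 2) applied to $\mathfrak{X}(\varphi)$ as a map to $\mathbb{P}^1(\mathbb C)$, since the pole divisor of $\mathfrak{X}(\varphi)$ is contained in the union of the pole divisor of $\varphi$ (with multiplicity one higher) and we compare counting and proximity functions in the usual way; the holomorphicity and nonvanishing of $\mathfrak{X}$ guarantee $\mathfrak{X}$ contributes no extra poles or zeros. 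Once these two points are in place the proof is essentially formal.
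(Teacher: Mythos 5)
Your approach matches the paper's: apply Theorem~\ref{ldl1} to $\varphi=\mathfrak{X}^k(\psi)$, and control $T(r,\mathfrak{X}^k(\psi))$ by $T(r,\psi)$ via induction, using the first-order LDL at each inductive step. One minor slip in an intermediate estimate: your inequality $T(r,\mathfrak{X}(\varphi))\leq T(r,\varphi)+m(r,\mathfrak{X}(\varphi)/\varphi)+O(1)$ drops a factor, since a pole of $\varphi$ of order $\nu$ becomes a pole of $\mathfrak{X}(\varphi)$ of order $\nu+1$ and the counting function $N$ is weighted by multiplicity; the correct inequality is $T(r,\mathfrak{X}(\varphi))\leq 2T(r,\varphi)+m(r,\mathfrak{X}(\varphi)/\varphi)+O(1)$, which is exactly what the paper uses in the base case ($m(r,\mathfrak{X}(\psi))+N(r,\mathfrak{X}(\psi))\leq m(r,\psi)+2N(r,\psi)+m(r,\mathfrak{X}(\psi)/\psi)$). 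Consequently the induction produces $T(r,\mathfrak{X}^k(\psi))\leq 2^kT(r,\psi)+O\big(\log T(r,\psi)-\kappa(r)r^2+\log^+\log r\big)\,\big\|$ (the paper's claim (\ref{hello})) rather than $(1+o(1))T(r,\psi)$. As you correctly anticipate, only $\log T(r,\mathfrak{X}^k(\psi))$ enters the final estimate and $\log\big(2^kT(r,\psi)\big)=\log T(r,\psi)+O(1)$, so this discrepancy is harmless; aside from the missing factor of two, your proof strategy and the paper's are the same.
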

\begin{proof}
We first claim that
\begin{eqnarray}\label{hello}
\ \ T\big{(}r,\mathfrak{X}^k(\psi)\big{)}
&\leq& 2^kT(r,\psi)+O\Big(\log T(r,\psi)-\kappa(r)r^2+\log^+\log r\Big) \big\|.
\end{eqnarray}
By virtue of Theorem \ref{ldl1}, when $k=1$
\begin{eqnarray*}
T(r,\mathfrak{X}(\psi))
&=& m(r,\mathfrak{X}(\psi))+N(r,\mathfrak{X}(\psi)) \\
&\leq& m(r,\psi)+2N(r,\psi)+m\Big(r,\frac{\mathfrak{X}(\psi)}{\psi}\Big) \\
&\leq& 2T(r,\psi)+m\Big(r,\frac{\mathfrak{X}(\psi)}{\psi}\Big) \\
&\leq& 2T(r,\psi)+O\Big(\log T(r,\psi)-\kappa(r)r^2+\log^+\log r\Big) \big\|
\end{eqnarray*}
holds for $r>1$ outside a set of finite Lebesgue measure.
Assuming now that the claim holds for $k\leq n-1.$ By induction, we only need to prove the claim in the case when $k=n.$ By the claim for $k=1$ proved above and Theorem \ref{ldl1} repeatedly, we have
\begin{eqnarray*}
 T\big{(}r,\mathfrak{X}^n(\psi)\big{)}
&\leq& 2T\big{(}r,\mathfrak{X}^{n-1}(\psi)\big{)}+O\Big(\log T\big{(}r,\mathfrak{X}^{n-1}(\psi)\big{)}-\kappa(r)r^2+\log^+\log r\Big) \\
&\leq& 2^{n}T(r,\psi)+O\Big(\log T(r,\psi)-\kappa(r)r^2+\log^+\log r\Big) \\
&&
+O\Big(\log T\big{(}r,\mathfrak{X}^{n-1}(\psi)\big{)}-\kappa(r)r^2+\log^+\log r\Big) \\
&\leq& 2^nT(r,\psi)+O\Big(\log T(r,\psi)-\kappa(r)r^2+\log^+\log r\Big) \\
&&+O\left(\log T\big{(}r,\mathfrak{X}^{n-1}(\psi)\big{)}\right) \\
&& \cdots\cdots\cdots \\
&\leq& 2^nT(r,\psi)+O\Big(\log T(r,\psi)-\kappa(r)r^2+\log^+\log r\Big) \big\|. \ \ \ \ \
\end{eqnarray*}
The claim (\ref{hello}) is proved.
 Using Theorem \ref{ldl1} and (\ref{hello}) to get
\begin{eqnarray*}
&& m\left(r,\frac{\mathfrak{X}^{k+1}(\psi)}{\mathfrak{X}^{k}(\psi)}\right) \\
&\leq& \frac{3}{2}\log T\big{(}r,\mathfrak{X}^k(\psi)\big{)}+O\Big(\log^+\log T(r,\mathfrak{X}^k(\psi))-\kappa(r)r^2+\log^+\log r\Big) \\
&\leq& \frac{3}{2}\log T(r,\psi)+O\Big(\log^+\log T(r,\psi)-\kappa(r)r^2+\log^+\log r\Big) \big\|. \ \ \ \ \
\end{eqnarray*}
\end{proof}

\section{Second Main Theorem}
\subsection{Wronskian determinants}~

Let $S$ be an open Riemann surface with  a nowhere-vanishing holomorphic vector field $\mathfrak{X}$ (it always exists), 
which is equipped with a complete Hermitian metric $h$ such that the  Gauss curvature $K_S\leq0.$
Let $$f:S\rightarrow\mathbb P^n(\mathbb C)$$ be a holomorphic curve into complex projective space with the Fubini-Study form $\omega_{FS}.$
 Locally, we may write $f=[f_0:\cdots:f_n],$  a reduced representation, i.e., $f_0=w_0\circ f,\cdots$ are local holomorphic functions without common zeros,
  where $w=[w_0:\cdots:w_n]$ denotes homogenous coordinate system of $\mathbb P^n(\mathbb C).$ Set $\|f\|^2=|f_0|^2+\cdots+|f_n|^2.$
  Notice that $\Delta_S\log\|f\|^2$ is independent of the choices of representations of $f,$ so it is well  defined on $S.$
 The height function of $f$ is defined by
$$
 T_f(r)= \pi\int_{D(r)}g_r(o,x)f^*\omega_{FS} 
 =\frac{1}{4}\int_{D(r)}g_r(o,x)\Delta_S\log\|f(x)\|^2dV(x). 
$$
 Let $H$ be a hyperplane  of $\mathbb P^n(\mathbb C)$ with defining function
$\hat{H}(w)=a_0w_0+\cdots+a_nw_n.$ Set $\|\hat{H}\|^2=|a_0|^2+\cdots+|a_n|^2.$
The counting function of $f$ with respect to $H$ is defined by
\begin{eqnarray*}
N_f(r,H) &=& \pi\int_{D(r)}g_r(o,x)dd^c\big{[}\log|\hat{H}\circ f|^2\big{]} \\
&=&\frac{1}{4}\int_{D(r)}g_r(o,x)\Delta_S\log|\hat{H}\circ f|^2dV(x).
\end{eqnarray*}
We define the proximity function of $f$ with respect to $H$  by
$$m_f(r,H)=\int_{\partial D(r)}\log\frac{\|\hat{H}\|\|f(x)\|}{|\hat{H}\circ f(x)|}d\pi_o^r(x).$$
\begin{lemma}\label{v000} Assume that $f_k\not\equiv0$ for some $k.$ We have
$$\max_{0\leq j\leq n}T\Big(r,\frac{f_j}{f_k}\Big)\leq T_f(r)+O(1).$$
\end{lemma}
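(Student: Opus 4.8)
The plan is to reduce the inequality to the classical fact that, for a meromorphic function $\varphi$ and a point $a\in\mathbb P^1(\mathbb C)$, one has $\hat T_\varphi(r)=\hat m_\varphi(r,a)+\hat N_\varphi(r,a)+O(1)$, together with the relation $(\ref{relation})$ which says $T(r,\varphi)+O(1)=\hat T_\varphi(r)\le T_\varphi(r)+O(1)$. Fix $k$ with $f_k\not\equiv 0$ and set $\varphi_j=f_j/f_k$ for $0\le j\le n$. I would first observe that $\varphi_j$ is a meromorphic function on $S$ with a reduced local representation $\varphi_j=[f_k:f_j]$ (away from the common zero set of $f_0,\dots,f_n$, where the $f_i$ have no common zeros), so $T_{\varphi_j}(r)$ is computed by the $(1,1)$-form $\tfrac14\Delta_S\log(|f_j|^2+|f_k|^2)$.

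The key pointwise estimate is that $|f_j|^2+|f_k|^2\le \|f\|^2=|f_0|^2+\cdots+|f_n|^2$, hence
\[
\Delta_S\log(|f_j(x)|^2+|f_k(x)|^2)\le \Delta_S\log\|f(x)\|^2
\]
\emph{is not} literally true as a pointwise inequality of Laplacians — the correct route is through positivity of the relevant currents. Concretely, $\tfrac14\Delta_S\log\|f\|^2\,dV=f^*\omega_{FS}\cdot(\text{const})$ is the pullback of a semipositive form, and likewise $\tfrac14\Delta_S\log(|f_j|^2+|f_k|^2)\,dV$ is $\varphi_j^*$ of the Fubini–Study form on $\mathbb P^1$ composed with the linear inclusion $[w_0:\dots:w_n]\mapsto[w_k:w_j]$; since this linear projection decreases the Fubini–Study form (it is distance-decreasing up to the obvious normalization), one gets the pointwise inequality of the two smooth $(1,1)$-forms, and therefore, after multiplying by $g_r(o,x)\ge 0$ and integrating over $D(r)$,
\[
T_{\varphi_j}(r)=\frac14\int_{D(r)}g_r(o,x)\Delta_S\log(|f_j|^2+|f_k|^2)\,dV(x)\ \le\ \frac14\int_{D(r)}g_r(o,x)\Delta_S\log\|f\|^2\,dV(x)=T_f(r).
\]
Combining this with $(\ref{relation})$ gives $T(r,\varphi_j)\le T_{\varphi_j}(r)+O(1)\le T_f(r)+O(1)$, and taking the maximum over $j$ yields the claim.

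The step I expect to be the main (if modest) obstacle is justifying the pointwise comparison of the two pulled-back forms cleanly, since it is the one place where one must be careful that "$\log$ of a smaller sum of squares" really does produce a smaller semipositive $(1,1)$-form after applying $dd^c$; the honest way to see it is that $\log\|f\|^2-\log(|f_j|^2+|f_k|^2)=\log\bigl(1+\sum_{i\ne j,k}|f_i/\sqrt{|f_j|^2+|f_k|^2}|^2\bigr)$ is (locally, away from poles) a plurisubharmonic function, hence has nonnegative Laplacian, which gives exactly $\Delta_S\log\|f\|^2\ge\Delta_S\log(|f_j|^2+|f_k|^2)$ as an inequality of measures — and the isolated singularities contributed by zeros of $f_j,f_k$ are pluripolar, so by the Remark following the Dynkin formula (or simply because the Green potential is insensitive to them) they do not affect the integrated inequality. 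Everything else is bookkeeping with $(\ref{relation})$ and the nonnegativity of $g_r(o,\cdot)$.
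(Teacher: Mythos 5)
Your proof is correct and follows essentially the same route as the paper: both reduce via $(\ref{relation})$ to the estimate $T_{f_j/f_k}(r)\le \frac14\int_{D(r)}g_r(o,x)\Delta_S\log\|f\|^2\,dV+O(1)$, which rests on the comparison $\Delta_S\log(|f_j|^2+|f_k|^2)\le\Delta_S\log\|f\|^2$ that the paper leaves implicit and you justify correctly via plurisubharmonicity of $\log\bigl(1+\sum_{i\ne j,k}|f_i|^2/(|f_j|^2+|f_k|^2)\bigr)$. One small slip worth noting: $[f_k:f_j]$ need not be a reduced representation of $f_j/f_k$ (the pair $f_j,f_k$ may share zeros even though $f_0,\dots,f_n$ collectively do not), but clearing common factors only subtracts a positive measure from $\Delta_S\log(|f_j|^2+|f_k|^2)$, so your quantity overestimates $T_{f_j/f_k}(r)$ and the desired upper bound is unaffected.
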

\begin{proof} By (\ref{relation}), we  arrive at
\begin{eqnarray*}
 T\Big(r,\frac{f_j}{f_k}\Big)
 &\leq&T_{f_j/f_k}(r)+O(1)     \\
 &\leq& \frac{1}{4}\int_{D(r)}g_r(o,x)\Delta_S\log\Big{(}\sum_{j=0}^n|f_j(x)|^2\Big{)}dV(x)+O(1) \\
  &=& T_{f}(r)+O(1).
\end{eqnarray*}
\end{proof}

 Let
 $H_1,\cdots, H_q$ be $q$ hyperplanes of $\mathbb P^n(\mathbb C)$ in $N$-subgeneral position with defining functions given by
$$\hat{H}_j(w)=\sum_{k=0}^na_{jk}w_{k}, \ \ 1\leq j\leq q.$$
 One  defines Wronskian determinant and logarithmic
Wronskian determinant of $f$ with respect to $\mathfrak X$ respectively  by
$$W_\mathfrak{X}(f_0,\cdots,f_n)=\left|
  \begin{array}{ccc}
   f_0 & \cdots & f_n \\
   \mathfrak{X}(f_0) & \cdots & \mathfrak{X}(f_n) \\
    \vdots & \vdots & \vdots \\
     \mathfrak{X}^n(f_0) & \cdots & \mathfrak{X}^n(f_n) \\
  \end{array}
\right|, \ \ \Delta_\mathfrak{X}(f_0,\cdots,f_n)=\left|
  \begin{array}{ccc}
   1 & \cdots & 1 \\
   \frac{\mathfrak{X}(f_0)}{f_0} & \cdots & \frac{\mathfrak{X}(f_n)}{f_n} \\
    \vdots & \vdots & \vdots \\
     \frac{\mathfrak{X}^n(f_0)}{f_0} & \cdots & \frac{\mathfrak{X}^n(f_n)}{f_n} \\
  \end{array}
\right|.$$
For a $(n+1)\times(n+1)$-matrix $A$  and a nonzero meromorphic function $\phi$  on $S,$ we can check the following basic properties:
\begin{eqnarray*}
\Delta_\mathfrak{X}(\phi f_0,\cdots,\phi f_n)&=&\Delta_\mathfrak{X}(f_0,\cdots,f_n), \\
W_\mathfrak{X}(\phi f_0,\cdots,\phi f_n)&=&\phi^{n+1}W_\mathfrak{X}(f_0,\cdots,f_n), \\
W_\mathfrak{X}\big{(}(f_0,\cdots,f_n)A\big{)}&=&\det(A)W_\mathfrak{X}(f_0,\cdots,f_n), \\
W_\mathfrak{X}(f_0,\cdots,f_n)&=&\Big{(}\prod_{j=0}^nf_j\Big{)}\Delta_\mathfrak{X}(f_0,\cdots,f_n).
\end{eqnarray*}
Obviously,  $\Delta_\mathfrak{X}(f_0,\cdots,f_n)$ is globally well defined on $S.$
 \begin{lemma}\label{ldl3} Let $Q\subseteq\{1,\cdots,q\}$ with $|Q|=n+1.$ If $S$ is simply connected, then we have
 \begin{eqnarray*}
 m\Big(r,\Delta_\mathfrak{X}\big{(}\hat{H}_k\circ f, k\in Q\big{)}\Big)
 &\leq& O\Big(\log T_f(r)-\kappa(r)r^2+\log^+\log r\Big) \big{\|},
 \end{eqnarray*}
where $\kappa$ is defined by $(\ref{kappa}).$
 \end{lemma}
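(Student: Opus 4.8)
The plan is to dominate the proximity function of the logarithmic Wronskian by a finite sum of proximity functions of logarithmic derivatives of suitable meromorphic functions on $S$, and then invoke the Logarithmic Derivative Lemma (Theorem~\ref{ldl2}) together with Lemma~\ref{v000}. As a first step I would fix an index $i$ with $f_i\not\equiv0$ and set $\psi_k=(\hat H_k\circ f)/f_i$ for $k\in Q$; each $\psi_k$ is a globally well-defined meromorphic function on $S$, since scaling the reduced representation of $f$ scales numerator and denominator alike. Using the invariance $\Delta_\mathfrak{X}(\phi g_0,\dots,\phi g_n)=\Delta_\mathfrak{X}(g_0,\dots,g_n)$ with $\phi=1/f_i$, one gets
$$\Delta_\mathfrak{X}\big(\hat H_k\circ f,\,k\in Q\big)=\Delta_\mathfrak{X}\big(\psi_k,\,k\in Q\big),$$
so it is enough to bound $m\big(r,\Delta_\mathfrak{X}(\psi_k,k\in Q)\big)$. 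If $f$ is linearly degenerate then $W_\mathfrak{X}(\hat H_k\circ f,k\in Q)\equiv0$, hence $\Delta_\mathfrak{X}\equiv0$ and the claim is trivial; so I may assume $f$ linearly nondegenerate, in which case each $\psi_k\not\equiv0$.

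Next I would expand $\Delta_\mathfrak{X}(\psi_k,k\in Q)$ as a signed sum over the $(n+1)!$ bijections $\sigma$ from the set of row indices $\{0,\dots,n\}$ to $Q$; each summand is $\pm\prod_{j=0}^{n}\mathfrak{X}^{j}(\psi_{\sigma(j)})/\psi_{\sigma(j)}$, and the row $j=0$ always contributes the factor $1$. This yields
$$\log^{+}\big|\Delta_\mathfrak{X}(\psi_k,k\in Q)\big|\le\sum_{k\in Q}\sum_{j=1}^{n}\log^{+}\Big|\frac{\mathfrak{X}^{j}(\psi_k)}{\psi_k}\Big|+O(1),$$
and integrating over $\partial D(r)$ against $d\pi_o^r$ gives
$$m\big(r,\Delta_\mathfrak{X}(\psi_k,k\in Q)\big)\le\sum_{k\in Q}\sum_{j=1}^{n}m\Big(r,\frac{\mathfrak{X}^{j}(\psi_k)}{\psi_k}\Big)+O(1),$$
where the terms with $\psi_k$ constant simply vanish.

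Then, for each nonconstant $\psi_k$ and each $1\le j\le n$, Theorem~\ref{ldl2} gives
$$m\Big(r,\frac{\mathfrak{X}^{j}(\psi_k)}{\psi_k}\Big)\le\frac{3j}{2}\log T(r,\psi_k)+O\big(\log^{+}\log T(r,\psi_k)-\kappa(r)r^{2}+\log^{+}\log r\big)\ \big\|.$$
Since $\hat H_k\circ f=\sum_{\ell}a_{k\ell}f_\ell$, we have $\psi_k=\sum_{\ell}a_{k\ell}(f_\ell/f_i)$, so by subadditivity of $T(r,\cdot)$ and Lemma~\ref{v000},
$$T(r,\psi_k)\le\sum_{\ell=0}^{n}T\Big(r,\frac{f_\ell}{f_i}\Big)+O(1)\le(n+1)T_f(r)+O(1),$$
whence $\log T(r,\psi_k)\le\log T_f(r)+O(1)$ and $\log^{+}\log T(r,\psi_k)\le\log^{+}\log T_f(r)+O(1)$. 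Summing the finitely many inequalities over $k\in Q$ and $1\le j\le n$ — the union of their exceptional sets still having finite Lebesgue measure — I obtain
$$m\big(r,\Delta_\mathfrak{X}(\psi_k,k\in Q)\big)\le O\big(\log T_f(r)+\log^{+}\log T_f(r)-\kappa(r)r^{2}+\log^{+}\log r\big)\ \big\|,$$
and absorbing $\log^{+}\log T_f(r)$ into $\log T_f(r)$ (valid once $T_f(r)$ is large, which holds since $f$ is nonconstant) gives the stated estimate.

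The argument is essentially bookkeeping: the determinant expansion of the second step and the subadditivity estimate of the third are routine, and all the analytic content is carried by Theorem~\ref{ldl2}. The only points that need minor care are the degenerate cases flagged in the first step (linear degeneracy of $f$, or a constant $\psi_k$) and organizing the finitely many exceptional sets so their union remains of finite measure; neither is a real obstacle.
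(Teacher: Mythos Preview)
Your proof is correct and follows essentially the same route as the paper: reduce the logarithmic Wronskian to one built from globally defined meromorphic functions via the scaling invariance $\Delta_\mathfrak{X}(\phi g_0,\dots,\phi g_n)=\Delta_\mathfrak{X}(g_0,\dots,g_n)$, expand, and feed each entry into Theorem~\ref{ldl2} together with Lemma~\ref{v000}. The only cosmetic difference is that the paper divides through by one of the linear forms $\hat H_{j_0}\circ f$ (obtaining $\Delta_\mathfrak{X}(1,\hat H_{j_1}\circ f/\hat H_{j_0}\circ f,\dots)$), whereas you divide by a coordinate $f_i$; both choices make the arguments of $\Delta_\mathfrak{X}$ global meromorphic functions controlled by Lemma~\ref{v000}, and the paper simply cites Theorem~\ref{ldl2} and Lemma~\ref{v000} without spelling out the determinant expansion you wrote down.
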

 \begin{proof} We write $Q=\{j_0,\cdots,j_n\}$ and suppose that $\hat{H}_{j_0}\circ f\not\equiv0$ without loss of generality.
 The property of logarithmic Wronskian determinant implies
 $$\Delta_\mathfrak{X}\big{(}\hat{H}_{j_0}\circ f,\cdots, \hat{H}_{j_n}\circ f\big{)}=
 \Delta_\mathfrak{X}\left(1,\frac{\hat{H}_{j_1}\circ f}{\hat{H}_{j_0}\circ f}, \cdots, \frac{\hat{H}_{j_n}\circ f}{\hat{H}_{j_0}\circ f}\right).$$
 Since $\hat{H}_{j_0}\circ f,\cdots,\hat{H}_{j_n}\circ f$ are linear forms of $f_0,\cdots, f_n,$ by Theorem \ref{ldl2} and
 Lemma
 \ref{v000} we have
 $$ m\Big(r,\Delta_\mathfrak{X}\big{(}\hat{H}_k\circ f, k\in Q\big{)}\Big)  \\
  \leq O\Big(\log T_f(r)-\kappa(r)r^2+\log^+\log r\Big) \big\|.$$
 We have arrived at the desired inequality.
 \end{proof}
\begin{lemma}\label{yyy}  Let $H_1\cdots,H_q$ be hyperplanes of $\mathbb P^n(\mathbb C).$
 Let $f:S\rightarrow \mathbb P^n(\mathbb C)$ be a linearly non-degenerate holomorphic curve. Then
\begin{eqnarray*}
&& \int _{\partial D(r)}\max_{Q}\sum_{k\in Q}\log\frac{\|\hat{H}_k\|\|f(x)\|}{|\hat{H}_k\circ f(x)|}d\pi^r_o(x) \\
&\leq&  (n+1)T_f(r)+O\Big(\log T_f(r)-\kappa(r)r^2+\log^+\log r\Big)  \big\|,
\end{eqnarray*}
where $Q$ ranges over all subsets of $\{1,\cdots,q\}$ such that $\{\hat{H}_k\}_{k\in Q}$ are linearly independent.
\end{lemma}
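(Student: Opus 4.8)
The plan is to run the Cartan-type argument underlying the classical second main theorem, now phrased through the harmonic measure $d\pi^r_o$ and reduced to the logarithmic Wronskian estimate of Lemma~\ref{ldl3}. Since $S$ is simply connected we fix a global reduced representation $f=[f_0:\cdots:f_n]$ and set $W:=W_{\mathfrak{X}}(f_0,\cdots,f_n)$, a global holomorphic function which is not identically zero because $f$ is linearly non-degenerate; we may assume $o$ lies off its (discrete) zero set. Enlarging $\{H_1,\cdots,H_q\}$ by the $n+1$ coordinate hyperplanes only increases the left-hand side and does not affect the applicability of Lemma~\ref{ldl3} (whose proof uses only that the $\hat{H}_k\circ f$ are linear forms in $f_0,\cdots,f_n$, which holds for the coordinate functions too), so we may assume the $\hat{H}_j$ span $(\CC^{n+1})^*$; then every maximal linearly independent subfamily has exactly $n+1$ members.

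First I would prove a pointwise inequality on $S$. Fix $x\in S$ and choose $Q$ realizing $\max_Q\sum_{k\in Q}\log\big(\|\hat{H}_k\|\,\|f(x)\|/|\hat{H}_k\circ f(x)|\big)$; since each summand is non-negative by Cauchy--Schwarz, $Q$ may be taken with $|Q|=n+1$ and $\{\hat{H}_k\circ f\}_{k\in Q}$ linearly independent. Writing $(\hat{H}_k\circ f)_{k\in Q}=(f_0,\cdots,f_n)A_Q$ with $c_Q:=\det A_Q\neq0$, the basic properties of Wronskians listed above give $W_{\mathfrak{X}}(\hat{H}_k\circ f:k\in Q)=c_Q W$ and $W_{\mathfrak{X}}(\hat{H}_k\circ f:k\in Q)=\big(\prod_{k\in Q}\hat{H}_k\circ f\big)\Delta_{\mathfrak{X}}(\hat{H}_k\circ f:k\in Q)$, so that solving for $\prod_{k\in Q}|\hat{H}_k\circ f|$ and using that there are only finitely many admissible $Q$,
\[
\sum_{k\in Q}\log\frac{\|\hat{H}_k\|\,\|f\|}{|\hat{H}_k\circ f|}\;\le\;(n+1)\log\|f\|-\log|W|+\log^+\big|\Delta_{\mathfrak{X}}(\hat{H}_k\circ f:k\in Q)\big|+O(1).
\]
Because the first two terms on the right are independent of $Q$, taking the maximum and bounding the last term by the sum over all admissible $Q$ yields
\[
\max_Q\sum_{k\in Q}\log\frac{\|\hat{H}_k\|\,\|f\|}{|\hat{H}_k\circ f|}\;\le\;(n+1)\log\|f\|-\log|W|+\sum_{Q}\log^+\big|\Delta_{\mathfrak{X}}(\hat{H}_k\circ f:k\in Q)\big|+O(1).
\]

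Next I integrate against $d\pi^r_o$ and identify the three contributions. Applying the Dynkin formula to the (smooth, since the $f_j$ have no common zero) subharmonic function $\log\|f\|^2$ together with the co-area formula and the definition of $T_f$ gives $\int_{\partial D(r)}\log\|f\|\,d\pi^r_o=T_f(r)+O(1)$, so the first term contributes $(n+1)T_f(r)+O(1)$. The function $\log|W|$ is subharmonic with at most a pluripolar singular set, so the Dynkin formula (valid in this setting as noted in the Remark following it) gives $\int_{\partial D(r)}\log|W|\,d\pi^r_o\ge\log|W(o)|$, whence the $-\log|W|$ term contributes at most $O(1)$; the zeros of $W$ only help. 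Finally $\int_{\partial D(r)}\log^+|\Delta_{\mathfrak{X}}(\hat{H}_k\circ f:k\in Q)|\,d\pi^r_o=m\big(r,\Delta_{\mathfrak{X}}(\hat{H}_k\circ f:k\in Q)\big)$, which by Lemma~\ref{ldl3} is $O(\log T_f(r)-\kappa(r)r^2+\log^+\log r)$ for $r>1$ outside a set of finite Lebesgue measure; summing over the finitely many $Q$ preserves the bound, and the union of the finitely many exceptional sets still has finite measure. Adding the three contributions gives the asserted inequality.

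The main obstacle is the pointwise step: one must handle the maximum taken over \emph{all} linearly independent subfamilies rather than only those of full size, which is what forces the non-negativity observation and the preliminary enlargement of the hyperplane family, and one must control $-\log|W_{\mathfrak{X}}(f)|$ — exactly the place where the ramification of $f$ enters, and where the subharmonicity of $\log|W_{\mathfrak{X}}(f)|$ (so that $\int_{\partial D(r)}\log|W_{\mathfrak{X}}(f)|\,d\pi^r_o\ge\log|W_{\mathfrak{X}}(f)(o)|$) is precisely what is needed. Once this pointwise inequality is secured, the rest is a routine combination of Lemma~\ref{ldl3}, the Dynkin formula, and the sub-mean-value inequality for subharmonic functions.
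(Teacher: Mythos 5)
Your proof is correct and follows essentially the same route as the paper: reduce the maximum to full-size subfamilies, factor $W_{\mathfrak{X}}(\hat{H}_k\circ f,k\in Q)$ as a constant times $W_{\mathfrak{X}}(f_0,\ldots,f_n)$, bound the logarithmic-Wronskian contribution via Lemma~\ref{ldl3}, and use Dynkin/the sub-mean-value property to turn $\log\|f\|$ into $T_f(r)$ and to absorb $-\log|W_{\mathfrak{X}}(f_0,\ldots,f_n)|$ (your subharmonicity observation $\int_{\partial D(r)}\log|W|\,d\pi^r_o\ge\log|W(o)|$ is exactly the paper's $-N_{W_{\mathfrak{X}}}(r,0)\le 0$). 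The only cosmetic difference is that you establish the pointwise inequality first and then integrate, and you justify the reduction to $q\ge n+1$ in general position by adjoining the coordinate hyperplanes rather than declaring it without loss of generality.
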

\begin{proof} Without loss of generality, we assume  that $q\geq n+1$ and $H_1\cdots,H_q$ are in general position. Then
\begin{eqnarray*}
   && \int _{\partial D(r)}\max_{Q}\sum_{k\in Q}\log\frac{\|\hat{H}_k\|\|f(x)\|}{|\hat{H}_k\circ f(x)|}d\pi^r_o(x) \\
   &=& \int _{\partial D(r)}\max_{|Q|=n+1}\log\prod_{k\in Q}\frac{\|\hat{H}_k\|\|f(x)\|}{|\hat{H}_k\circ f(x)|}d\pi^r_o(x) \\
   &\leq& \int _{\partial D(r)}\max_{|Q|=n+1}\log\frac{\|f(x)\|^{n+1}}
   {\prod_{k\in Q}|\hat{H}_k\circ f(x)|}d\pi^r_o(x)+O(1) \\
   &=& \int _{\partial D(r)}\max_{|Q|=n+1}
   \log \frac{\big|\Delta_\mathfrak{X}\big(\hat{H}_k\circ f(x),\ k\in Q\big)\big|\|f(x)\|^{n+1}}
   {\big|W_\mathfrak{X}\big(\hat{H}_k\circ f(x),\ k\in Q\big)\big|}d\pi^r_o(x)+O(1).
\end{eqnarray*}
By $W_\mathfrak{X}(\hat{H}_k\circ f,\ k\in Q)=b_QW_\mathfrak{X}(f_0,\cdots,f_n)$ (with $|Q|=n+1$) for a nonzero constant $b_Q$ depending on $Q,$ we further conclude that
\begin{eqnarray*}
   && \int _{\partial D(r)}\max_{Q}\sum_{k\in Q}\log\frac{\|\hat{H}_k\|\|f(x)\|}{|\hat{H}_k\circ f(x)|}d\pi^r_o(x) \\
  &\leq&
   \int _{\partial D(r)}\max_{|Q|=n+1}\log \Big|\Delta_\mathfrak{X}\big(\hat{H}_k\circ f(x),\ k\in Q\big)\Big|d\pi^r_o(x) \\
   &&+\int _{\partial D(r)} \log\frac{\|f(x)\|^{n+1}}{\big|W_\mathfrak{X}\big(f_0(x),\cdots,f_n(x)\big)\big|}d\pi^r_o(x)+O(1) \\
   &:=& A+B+O(1).
\end{eqnarray*}
We next handle the terms $A$ and $B.$ By Lemma \ref{ldl3}
\begin{eqnarray*}
  A &\leq& \int _{\partial D(r)}\log \sum_{|Q|=n+1}\Big|\Delta_\mathfrak{X}\big(\hat{H}_k\circ f(x),\ k\in Q\big)\Big|d\pi^r_o(x) \\
  &\leq& \sum_{|Q|=n+1}m\Big(r,\Delta_\mathfrak{X}\big{(}\hat{H}_k\circ f, k\in Q\big{)}\Big)
  +O(1) \\
  &\leq& O\Big(\log T_f(r)-\kappa(r)r^2+\log^+\log r\Big)  \big\|.
\end{eqnarray*}
Apply Dynkin formula to $B,$
\begin{eqnarray*}
  B &=& \frac{1}{2}\int_{D(r)}g_r(o,x)\Delta_S\log\frac{\|f(x)\|^{n+1}}{\big|W_\mathfrak{X}\big(f_0(x),\cdots,f_n(x)\big)\big|}dV(x)+O(1) \\
  &=& (n+1)T_f(r)-N_{W_\mathfrak{X}}(r,0)+O(1) \\
   &\leq& (n+1)T_f(r)+O(1).
\end{eqnarray*}
Putting together the above, we have the desired inequality.
\end{proof}
\begin{theorem}\label{xy12} Let $D$ be an effective Cartier divisor on a complex projective variety $M.$ Let $s_1,\cdots,s_q$ be nonzero
 elements of a nonzero linear subspace  $V\subseteq H^0(M, L_D).$ Let $f:S\rightarrow M$ be a holomorphic curve with Zariski dense image. Then
\begin{eqnarray*}
&& \int _{\partial D(r)}\max_{Q}\sum_{k\in Q}\lambda_{s_k}\circ f(x)d\pi^r_o(x) \\
&\leq&  \dim_{\mathbb C}VT_{f,D}(r)+O\Big(\log T_{f,D}(r)-\kappa(r)r^2+\log^+\log r\Big)  \big\|,
\end{eqnarray*}
where  $\lambda_{s_k}$ denotes the Weil function of $(s_k),$ and $Q$ ranges over all subsets of $\{1,\cdots,q\}$ such that $\{s_k\}_{k\in Q}$ are linearly independent.
\end{theorem}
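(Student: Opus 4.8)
The plan is to reduce the statement to Lemma~\ref{yyy} by passing to the holomorphic curve $F:S\to\mathbb P^N(\mathbb C)$ associated to the linear system $V$. Since replacing $V$ by the span of $s_1,\dots,s_q$ only decreases $\dim_{\mathbb C}V$ (and only enlarges the right-hand side), I may assume $s_1,\dots,s_q$ generate $V$; and if $\dim_{\mathbb C}V=1$ the $s_k$ are mutually proportional, so $\max_Q\sum_{k\in Q}\lambda_{s_k}\circ f=\lambda_{s_1}\circ f+O(1)$ and the estimate is immediate from the First Main Theorem together with $m_f(r,(s_1))\leq T_{f,(s_1)}(r)+O(1)=T_{f,D}(r)+O(1)$. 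So assume $N+1:=\dim_{\mathbb C}V\geq2$; we may also assume $T_{f,D}(r)\to\infty$, since otherwise there is nothing to prove.

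First I would fix a basis $\phi_0,\dots,\phi_N$ of $V$ and write $s_k=\sum_j c_{kj}\phi_j$, with associated linear form $\hat H_k(w)=\sum_j c_{kj}w_j$ defining a hyperplane $\tilde H_k\subseteq\mathbb P^N(\mathbb C)$. In a local holomorphic frame $e$ of $L_D$, write $\phi_j=\tilde\phi_j e$ and let $\mathfrak h$ be the greatest common holomorphic divisor of $\tilde\phi_0\circ f,\dots,\tilde\phi_N\circ f$; setting $F_j=(\tilde\phi_j\circ f)/\mathfrak h$ gives a local reduced representation $F=[F_0:\cdots:F_N]$. Because $f(S)$ is Zariski-dense it is not contained in the base locus $\bigcap_j\{\phi_j=0\}$, so $f^{-1}$ of that base locus is discrete and the local data glue to a globally defined holomorphic curve $F:S\to\mathbb P^N(\mathbb C)$; the same Zariski-density shows $F$ is linearly non-degenerate, since a hyperplane containing $F(S)$ would produce a nonzero element of $V$ vanishing on $f(S)$. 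Equipping $L_D$ with the metric determined by $\|e\|^{-2}=\sum_j|\tilde\phi_j|^2$ one gets, up to bounded terms,
$$\lambda_{s_k}\circ f(x)=\log\frac{\|F(x)\|}{|\hat H_k\circ F(x)|}=\log\frac{\|\hat H_k\|\,\|F(x)\|}{|\hat H_k\circ F(x)|}+O(1),$$
and likewise $T_{f,D}(r)=T_F(r)+N_{\mathfrak h}(r)+O(1)\geq T_F(r)+O(1)$, where $N_{\mathfrak h}(r)\geq0$ counts the zeros of $\mathfrak h$.

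Then I would feed this into Lemma~\ref{yyy} applied to the linearly non-degenerate $F$ and the hyperplanes $\tilde H_1,\dots,\tilde H_q$: the subsets $Q$ with $\{s_k\}_{k\in Q}$ linearly independent in $V$ are exactly those with $\{\hat H_k\}_{k\in Q}$ linearly independent, so
\begin{eqnarray*}
\int_{\partial D(r)}\max_Q\sum_{k\in Q}\lambda_{s_k}\circ f(x)\,d\pi^r_o(x)
&\leq&\int_{\partial D(r)}\max_Q\sum_{k\in Q}\log\frac{\|\hat H_k\|\,\|F(x)\|}{|\hat H_k\circ F(x)|}\,d\pi^r_o(x)+O(1) \\
&\leq&(\dim_{\mathbb C}V)\,T_F(r)+O\Big(\log T_F(r)-\kappa(r)r^2+\log^+\log r\Big)\big\| \\
&\leq&(\dim_{\mathbb C}V)\,T_{f,D}(r)+O\Big(\log T_{f,D}(r)-\kappa(r)r^2+\log^+\log r\Big)\big\|,
\end{eqnarray*}
where the last step uses $T_F(r)\leq T_{f,D}(r)+O(1)$, which also gives $\log^+T_F(r)\leq\log^+T_{f,D}(r)+O(1)$ once $T_{f,D}(r)\to\infty$. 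This is exactly the assertion.

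The genuine analytic content — the Wronskian/logarithmic-derivative estimate, and in particular the curvature term $-\kappa(r)r^2$ — is entirely contained in Lemma~\ref{yyy} (and behind it Lemma~\ref{ldl3}, Theorem~\ref{ldl2}, and Dynkin's formula), so here there is no new estimate to prove. The only points requiring care, and the place I would expect a referee to look hardest, are (i) that $F$ is a well-defined \emph{globally} holomorphic, linearly non-degenerate curve on all of $S$ — this is where Zariski-density is essential and where one must be slightly careful because $M$ may be singular, so one works with local reduced representations and checks they glue — and (ii) that the $V$-metric on $L_D$ identifies $\sum_{k\in Q}\lambda_{s_k}\circ f$ with the corresponding hyperplane proximity integrand for $F$ up to $O(1)$. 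Both are routine, so I anticipate no serious obstacle beyond this bookkeeping.
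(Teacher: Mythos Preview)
Your overall route---map to $\mathbb P^N(\mathbb C)$ via a basis of $V$ and invoke Lemma~\ref{yyy}---is the paper's route; the paper phrases it as lifting $f$ to the closure $M'$ of the graph of the rational map $M\dashrightarrow\mathbb P(V)$ and composing with the projection $\phi:M'\to\mathbb P^{d-1}(\mathbb C)$, so your $F$ is the paper's $\phi\circ\tilde f$ and your $\mathfrak h$ corresponds to $\tilde f^*B$ for the base divisor $B$ on $M'$.

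There is, however, a real gap at your step~(ii) when $V$ has base points. The metric $\|e\|^{-2}=\sum_j|\tilde\phi_j|^2$ is then singular along $\mathrm{Bs}(V)$, so it is not a Hermitian metric on $L_D$ and the resulting ``Weil function'' does not agree with the standard one up to $O(1)$. Concretely, with any smooth metric one has
\[
\lambda_{s_k}\circ f=\log\frac{\|\hat H_k\|\,\|F\|}{|\hat H_k\circ F|}+\beta+O(1),\qquad \beta:=-\tfrac12\log\Big(\sum_j\|\phi_j\|^2\Big)\circ f,
\]
and $\beta$ is bounded below but blows up along $f^{-1}(\mathrm{Bs}(V))$; hence your first displayed inequality fails, and since the last step only uses $T_F(r)\le T_{f,D}(r)+O(1)$ the chain does not close. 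The repair is to carry the extra term: one checks (Dynkin/Green--Jensen) that $\int_{\partial D(r)}\beta\,d\pi^r_o=T_{f,D}(r)-T_F(r)-N_{\mathfrak h}(r)+O(1)$, so $(N{+}1)T_F(r)+(N{+}1)\int\beta\le(N{+}1)T_{f,D}(r)+O(1)$ and the conclusion follows. This is exactly what the paper encodes with the effective divisor $B$ on $M'$ satisfying $\pi^*(s_k)=\phi^*H_k+B$: one has $\lambda_B\circ\tilde f=\beta+O(1)$, the paper bounds $\int\max_Q|Q|\,\lambda_B\circ\tilde f\le d\,T_{\tilde f,B}(r)+O(1)$ by the First Main Theorem, and then uses $T_{f,D}(r)=T_{\phi\circ\tilde f}(r)+T_{\tilde f,B}(r)+O(1)$. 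So the base-locus bookkeeping is not merely ``routine''; it is the one substantive point beyond Lemma~\ref{yyy}, and your write-up omits it.
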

\begin{proof} Set $d=\dim_{\mathbb C}V.$ If $d=1,$ then $|Q|=1.$ Hence, for  $1\leq j\leq q,$ we have $s_k=b_ks_D$  for some constant $b_k\neq0.$
By the First Main Theorem
\begin{eqnarray*}
\int _{\partial D(r)}\max_{Q}\sum_{k\in Q}\lambda_{s_k}\circ f(x)d\pi^r_o(x)
&\leq& \int _{\partial D(r)}\lambda_{D}\circ f(x)d\pi^r_o(x)+O(1) \\
&\leq&  T_{f,D}(r)+O(1).
\end{eqnarray*}
\ \ \ \ If $d>1,$  we treat the projective space $P(V)$ of $V$ that can be regarded as $\mathbb P^{d-1}(\mathbb C).$  Let $M'$ be the closure of  graph of $f,$ then there exists the canonical projection morphisms
 $\pi:M'\rightarrow M$ and $\phi:M'\rightarrow \mathbb P^{d-1}(\mathbb C).$ Now we lift $f$ to $\tilde{f}:S\rightarrow M'.$ Notice that (see \cite{ru1}) there is an effective Cartier 
divisor $B$ on $M'$ such that for each $s\in V,$ we can choose a hyperplane $H_s$ (depending on $s$) of $\mathbb P^{d-1}(\mathbb C)$ which satisfies
$\pi^*(s)-B=\phi^*H_s$ (more precisely, $\phi^*\mathscr O(1)\cong L_{\pi^*D-B}$). For $1\leq j\leq q,$ one chooses hyperplanes $H_j$ of $\mathbb P^{d-1}(\mathbb C)$ such that
$\pi^*(s_j)-B=\phi^*H_j.$ Since $M$ is compact,  then we have
\begin{equation}\label{xxx}
  \lambda_{\pi^*(s_j)}=\lambda_{\phi^*H_j}+\lambda_{B}+O(1).
\end{equation}
In further, we have
\begin{eqnarray}
\label{aa1} N_{\tilde{f}}\big(r,\pi^*(s_j)\big)&=& N_{\tilde{f}}\big(r,\phi^*H_{j}\big)+N_{\tilde{f}}(r,B), \\
\label{aa2} m_{\tilde{f}}\big(r,\pi^*(s_j)\big)&=& m_{\tilde{f}}\big(r,\phi^*H_{j}\big)+m_{\tilde{f}}(r,B)+O(1).
\end{eqnarray}
Note that $\phi\circ\tilde{f}:S\rightarrow\mathbb P^{d-1}(\mathbb C)$ is a holomorphic curve,  using the First Main Theorem, it yields that
 $T_{\phi\circ\tilde{f}}(r)=m_{\phi\circ\tilde{f}}(r,H_{j})+N_{\phi\circ\tilde{f}}(r,H_{j})+O(1).$
  Indeed,  $L_{(s_j)}\cong L_D$ and $f=\pi\circ \tilde{f}$ are noted. By (\ref{aa1}) and (\ref{aa2}),  we arrive at
\begin{equation}\label{f11}
  T_{f,D}(r) = T_{\phi\circ\tilde{f}}(r)+T_{\tilde{f},B}(r)+O(1).
\end{equation}
Combining (\ref{xxx}) with $\lambda_{s_{j}}\circ f=\lambda_{\pi^*(s_j)}\circ \tilde{f}+O(1),$ it suffices  to show that
\begin{eqnarray*}
&& \int _{\partial D(r)}\max_{Q}\sum_{k\in Q}\Big(
\lambda_{H_k}\circ \phi\circ \tilde{f}(x)+\lambda_B\circ \tilde{f}(x)\Big)d\pi^r_o(x) \\
&\leq&  dT_{f,D}(r)+O\Big(\log T_{f,D}(r)-\kappa(r)r^2+\log^+\log r\Big)  \big\|.
\end{eqnarray*}
In fact, by Lemma \ref{yyy} and (\ref{f11}) we have
\begin{eqnarray*}
&& \int _{\partial D(r)}\max_{Q}\sum_{k\in Q}
\lambda_{H_k}\circ \phi\circ \tilde{f}(x)d\pi^r_o(x)
 \\
&=& \int _{\partial D(r)}\max_{Q}\sum_{k\in Q}\log\frac{\|\hat{H}_k\|\|\phi\circ \tilde{f}(x)\|}
{|\hat{H}_k\circ\phi\circ \tilde{f}(x)|}
d\pi^r_o(x)+O(1)
 \\
 &\leq&  dT_{\phi\circ \tilde{f}}(r)+O\Big(\log T_{\phi\circ \tilde{f}}(r)-\kappa(r)r^2+\log^+\log r\Big)  \\
&\leq&  d\big(T_{f,D}(r)-T_{\tilde{f},B}(r)\big)+O\Big(\log T_{f,D}(r)-\kappa(r)r^2+\log^+\log r\Big) \big\|.
\end{eqnarray*}
 Since $|Q|\leq d,$
  the First Main Theorem implies that
$$\int _{\partial D(r)}\max_{Q}\sum_{k\in Q}\lambda_B\circ \tilde{f}(x)d\pi^r_o(x)\leq dT_{\tilde{f},B}(r)+O(1).$$
Combining the above, we conclude the proof.
\end{proof}

\subsection{Second Main Theorem}~

In this subsection, we aim to prove the main theorem of the paper, namely, the Second Main Theorem (Theorem \ref{thm1}).

Let $S$ be a complete open Riemann surface with  Gauss curvature $K_S\leq0.$
 We here consider the universal covering $\pi:\tilde{S}\rightarrow S.$ By the pull-back of $\pi,$ $\tilde{S}$ could be equipped with the induced metric
 from the
metric of $S.$ In such case, $\tilde{S}$ is a simply-connected and complete open Riemann surface of non-positive Gauss curvature.
Take a diffusion process $\tilde{X}_t$ in $\tilde{S}$  so that $X_t=\pi(\tilde{X}_t),$ then
$\tilde{X}_t$ becomes a Brownian motion with generator  $\Delta_{\tilde{S}}/2$ which is induced from the pull-back metric.
Let $\tilde{X}_t$ start from $\tilde{o}\in\tilde{S}$  with $o=\pi(\tilde{o}),$ then we have
$$\mathbb E_o[\phi(X_t)]=\mathbb E_{\tilde{o}}\big{[}\phi\circ\pi(\tilde{X}_t)\big{]}$$
for $\phi\in \mathscr{C}_{\flat}(S).$ Set $$\tilde{\tau}_r=\inf\big{\{}t>0: \tilde{X}_t\not\in \tilde{D}(r)\big{\}},$$ where
$\tilde{D}(r)$ is a geodesic disc centered at $\tilde{o}$ with radius $r$ in $\tilde{S}.$
 If necessary, one can extend the filtration in probability space where $(X_t,\mathbb P_o)$ are defined so that $\tilde{\tau}_r$ is a stopping time with
 respect to a filtration where the stochastic calculus of $X_t$ works.
By the above arguments, we would assume  $S$  is simply connected by lifting $f$ to the covering.

\emph{Proof of Theorem $\ref{thm1}$}
\begin{proof}
Let $\mathfrak{P}$ be the set of all prime divisors occurring in $D,$ then
$$D=\sum_{E\in\mathfrak{P}}{\rm{ord}}_E(D)\cdot E.$$
Set
$\Lambda=\{\sigma\subseteq\mathfrak{P}: \cap_{E\in\sigma}E\neq\emptyset\}$ which is a finite set.
For any $\sigma\in\Lambda,$ we write $D=D_{\sigma,1}+D_{\sigma,2},$ where
$$D_{\sigma,1}=\sum_{E\in\sigma}{\rm{ord}}_E(D)\cdot E, \ \ \ D_{\sigma,2}=\sum_{E\not\in\sigma}{\rm{ord}}_E(D)\cdot E.$$
From the definition of ${\rm{Nev}}(L,D),$ for each $\sigma\in\Lambda,$ there exists a basis $\mathscr B_{\sigma}$
of a linear subspace
$V_k\subseteq H^0(X,kL)$   with $\dim_\mathbb CV_k>1$ (for some $k$) such that
$$\frac{1}{\mu_k}\sum_{s\in\mathscr B_{\sigma}}{\rm{ord}}_E(s)\geq{\rm{ord}}_E(kD)$$
 at some (and hence all) points $x\in \cap_{E\in\sigma}E.$ For each $E\in\sigma,$ we have
 \begin{equation}\label{ww2}
  \frac{1}{\mu_k}\sum_{s\in\mathscr B_{\sigma}}{\rm{ord}}_E(s)\cdot\lambda_E\geq{\rm{ord}}_E(kD)\cdot\lambda_E.
\end{equation}
Note that (refer to the proof of Proposition 3.1 in \cite{ru0}) there exists  a number $B>0$ such that for each $x\in X,$
one can pick  $\sigma_x\in\Lambda$ (depending on $x$) such that
$\lambda_{D_{\sigma_{x},2}}(x)\leq B,$ here $B$ is independent of $x$. Thus,
\begin{equation}\label{ww1}
  \lambda_D(x)\leq\lambda_{D_{\sigma_{x},1}}(x)+O(1).
\end{equation}
 By properties of Weil functions,
we have from (\ref{ww1}) and (\ref{ww2}) that
$$\lambda_{kD}(x)\leq\frac{1}{\mu_k}\max_{\sigma\in\Lambda}\sum_{s\in\mathscr B_{\sigma}}\lambda_{s}(x)+O(1),$$
where $\lambda_s(x)$ is the Weil function of $(s).$ Taking the expectation to get
$$km_f(r,D)\leq \frac{1}{\mu_k}\int_{\partial D(r)}\max_{\sigma\in\Lambda}\sum_{s\in\mathscr B_{\sigma}}\lambda_{s}(x)d\pi^r_o(x)+O(1).$$
Making use of Theorem \ref{xy12}, we obtain
$$km_f(r,D)
\leq \frac{\dim_{\mathbb C}V_k}{\mu_k}T_{f,kL}(r)+o\big(T_{f,kL}(r)\big)+O\Big(-\kappa(r)r^2+\log^+\log r\Big) \big\|.$$
This proves  Theorem \ref{thm1}.
\end{proof}

\noindent\textbf{Acknowledgement.} The author is   grateful to   
Prof. Min Ru and Dr. Yan He for their valuable suggestions on this paper.
\vskip\baselineskip

\label{lastpage-01}

\vskip\baselineskip
\vskip\baselineskip

\end{document}